\documentclass{amsart}

\usepackage{amsmath,amssymb}
\usepackage{mathrsfs}
\usepackage{mathtools}
\usepackage{enumerate}
\usepackage{tikz-cd}
\usepackage[all]{xy}
\usepackage{aliascnt}
\usepackage[colorlinks=true, linkcolor=blue, citecolor=blue]{hyperref}
\usepackage{ytableau}
\usepackage{graphicx}
\usepackage{enumitem}


\usepackage{fullpage}

\newtheorem{theorem}{Theorem}

\newaliascnt{lemma}{theorem}
\newtheorem{lemma}[lemma]{Lemma}
\aliascntresetthe{lemma}

\newaliascnt{corollary}{theorem}
\newtheorem{corollary}[corollary]{Corollary}
\aliascntresetthe{corollary}

\newaliascnt{proposition}{theorem}
\newtheorem{proposition}[proposition]{Proposition}
\aliascntresetthe{proposition}

\newaliascnt{conjecture}{theorem}

\aliascntresetthe{conjecture}

\newaliascnt{question}{theorem}

\aliascntresetthe{question}

\theoremstyle{definition}

\newaliascnt{definition}{theorem}
\newtheorem{definition}[definition]{Definition}
\aliascntresetthe{definition}

\newaliascnt{remark}{theorem}
\newtheorem{remark}[remark]{Remark}
\aliascntresetthe{remark}

\newaliascnt{example}{theorem}
\newtheorem{example}[example]{Example}
\aliascntresetthe{example}

\newaliascnt{notation}{theorem}

\aliascntresetthe{notation}

\newtheorem*{acknowledgements}{Acknowledgements}

\newif\ifhascomments \hascommentstrue
\ifhascomments
  \newcommand{\matt}[1]{{\color{red}[[\ensuremath{\spadesuit\spadesuit\spadesuit} #1]]}}
  \newcommand{\yohan}[1]{{\color{red}[[\ensuremath{\clubsuit\clubsuit\clubsuit} #1]]}}
  \newcommand{\ron}[1]{{\color{red}[[\ensuremath{\heartsuit\heartsuit\heartsuit} #1]]}}
\else
  \newcommand{\ron}[1]{}
  \newcommand{\yohan}[1]{}
  \newcommand{\matt}[1]{}
\fi

\renewcommand{\setminus}{\smallsetminus}

\newcommand{\Z}{\mathbb{Z}}
\newcommand{\N}{\mathbb{N}}

\newcommand{\cF}{\mathcal{F}}

\newcommand{\cT}{\mathcal{T}}

\newcommand{\cA}{\mathcal{A}}

\newcommand{\fm}{\mathfrak{m}}

\newcommand{\Ann}{\mathrm{Ann}}

\DeclareMathOperator{\spann}{Span}
\DeclareMathOperator{\supp}{supp}

\DeclareMathOperator{\MM}{M}



\newcounter{casecounter}
\setcounter{casecounter}{0}

\tikzset{cong/.style={draw=none,edge node={node [sloped, allow upside down, auto=false]{$\cong$}}},
         Isom/.style={above,every to/.append style={edge node={node [sloped, allow upside down, auto=false]{$\sim$}}}}}


\title{On the algebra generated by three commuting matrices:~combinatorial cases}

\author{Ron Cherny, Matthew Satriano, and Yohan Song}

\thanks{MS was partially supported by a Discovery Grant from the National Science and Engineering Research Council of Canada as well as a Mathematics Faculty Research Chair from the University of Waterloo}

\address{Ron Cherny, Department of Pure Mathematics, University of Waterloo}
\email{rcherny@uwaterloo.ca}

\address{Matthew Satriano, Department of Pure Mathematics, University of Waterloo}
\email{msatriano@uwaterloo.ca}

\address{Yohan Song, Department of Pure Mathematics, University of Waterloo}
\email{y343song@uwaterloo.ca}



\DeclareMathOperator{\score}{score}
\DeclareMathOperator{\height}{height}

\begin{document}

\begin{abstract}
Gerstenhaber proved in 1961 that the unital algebra generated by a pair of commuting $d\times d$ matrices over a field has dimension at most $d$. It is an open problem whether the analogous statement is true for triples of matrices which pairwise commute. We answer this question for special classes of triples of matrices arising from combinatorial data.
\end{abstract}

\maketitle

\numberwithin{theorem}{section}
\numberwithin{lemma}{section}
\numberwithin{corollary}{section}
\numberwithin{proposition}{section}
\numberwithin{conjecture}{section}
\numberwithin{question}{section}
\numberwithin{remark}{section}
\numberwithin{definition}{section}
\numberwithin{example}{section}
\numberwithin{notation}{section}

\setcounter{tocdepth}{1}
\tableofcontents

\section{Introduction}
\label{Sec:Introduction}

Given a field $k$, the well-known Cayley--Hamilton theorem asserts that every matrix $A\in\MM_d(k)$ is a root of its characteristic polynomial. In particular, the (unital) algebra generated by $A$ is a $k$-vector space of dimension at most $d$. In 1961, Gerstenhaber \cite{Gerstenhaber} generalized this result to pairs of commuting matrices:~if $A,B\in\MM_d(k)$ are commuting $d\times d$ matrices then the algebra they generate has dimension at most $d$. In contrast, it is known that for all $n\geq4$, there exists $d$ and pairwise commuting matrices $A_1,\dots,A_n\in\MM_d(k)$ such that the algebra generated by the $A_i$ has dimension strictly larger than $d$, see Example \ref{ex:4varcounter-example}. It has been a longstanding open question (which we refer to as the \emph{Gerstenhaber problem}) to determine whether pairwise commuting matrices $A,B,C\in\MM_d(k)$ generate an algebra of dimension at most $d$.

Let us begin by discussing some of the known cases of the Gerstenhaber problem. To begin, Gerstenhaber's original proof was algebro-geometric. He considered the algebraic variety $C(2,d)$ parameterizing pairs $(A,B)$ of commuting $d\times d$ matrices and proved it is irreducible (cf.~\cite{MTT}). This allowed him to reduce to the case of \emph{generic} pairs $(A,B)$ of commuting matrices, which are simultaneously diagonalizable; hence, the result follows from Cayley--Hamilton. In fact, Gerstenhaber's technique of reducing to simultaneously diagonalizable matrices works whenever we have irreducibility of the variety $C(n,d)$, parameterizing pairwise commuting $d\times d$ matrices $(A_1,\dots,A_n)$. Unfortunately, for $n\geq3$, $C(n,d)$ is notoriously complicated. For $n\geq4$ and $d\geq4$, $C(n,d)$ has multiple irreducible components, see \cite{Gerstenhaber,Guralnick92}. For $n=3$, much less is known:~the variety $C(n,d)$ is irreducible for $d\leq 10$ \cite{Sivic2} and reducible for $d\geq 29$ \cite{HolOmla,NgoSivic14}. See also \cite{JelisiejewSivic22} for further results on the structure of components of $C(n,d)$. In general, the Gerstenhaber problem is reduced to checking at the generic points of every irreducible component of $C(n,d)$; however such an approach is essentially intractable.

The aforementioned results are geometric and concern the structure of $C(n,d)$. Other proofs of Gerstenhaber's theorem were later discovered, such as commutative algebraic proofs \cite{Wadsworth, Bergman} and linear algebraic proofs \cite{BH, LL}. In addition to the case $d\leq10$ mentioned above, several other cases of the Gerstenhaber problem are known when one imposes linear algebraic constraints. For example, if one of the three $d\times d$ matrices $A_1,A_2,A_3$ has nullity at most $3$, then it was shown in \cite{GS,sivicII} that the algebra the matrices generate has dimension at most $d$. The Gerstenhaber problem is also known if one of the matrices has index at most $2$, i.e., some $A_i^2=0$, see \cite{HolOmla}. We refer to \cite{SethurSurvey, HolbrookOmeara} for a survey of further results.

\vspace{1.2em}

In this paper, the viewpoint we take is to break up the Gerstenhaber problem based on the minimal number of \emph{generating vectors} required. Given pairwise commuting $d\times d$ matrices $(A_1,\dots,A_n)$, let $\cA$ be the algebra they generate. We say $v_1,\dots,v_r\in k^d$ are generating vectors if
\[
\spann\{Av_j\mid A\in\cA,\,j\leq r\}=k^d.
\]
When $r=1$, it is straightforward to check that $\dim_k\cA\leq d$. 
On the other hand, for $r=2$, the Gerstenhaber problem is still open and highly non-trivial. Indeed, the simplest subcase when $r=2$ is when $A_iv_2\in\spann\{Av_1\mid A\in\cA,\,j\leq r\}$ for all $i$; this case was only recently resolved by Rajchgot and the second author \cite[Theorem 1.5]{RajSat}, showing $\dim_k\cA\leq d$.

Following \cite{RSS}, our current paper considers a broad class of combinatorially motivated examples when $r=2$. Before giving the formal definition, we begin with an example.

\begin{example}\label{ex:4varcounter-example}
As mentioned above, there exist choices of pairwise commuting matrices $A_1,\dots,A_4\in\MM_d(k)$ such that the algebra $\cA$ generated by the $A_i$ has $\dim_k\cA>d$. 
The standard such example is given by taking $d=4$ and letting the four pairwise commuting matrices be $E_{13}$, $E_{23}$, $E_{14}$, and $E_{24}$. Then $\cA$ has a basis given by these matrices as well as the identity matrix $I$, hence $\dim_k\cA=5>4=d$.

This example can be understood combinatorially as follows. Let $S=k[x_1,\dots,x_4]$ and consider the monomial ideals
\[
I=(x_1,x_2)^2+(x_3,x_4)\quad\textrm{and}\quad J=(x_3,x_4)^2+(x_1,x_2).
\]
We consider the $S$-module $M$ obtained from $S/I\oplus S/J$ by gluing $(x_1,0)$ to $(0,x_3)$, and gluing $(x_2,0)$ to $(0,x_4)$, i.e., 
\[
M=(S/I\oplus S/J)/\langle (x_1,0)-(0,x_3), (x_2,0)-(0,x_4)\rangle.
\]
Then $M$ is a vector space of dimension $d=4$ with basis $(1,0)$, $(0,1)$, $(x_1,0)$, and $(x_2,0)$. Multiplication by $x_i$ on $M$ yields $n=4$ commuting matrices. These matrices are precisely the same as the standard example given in the previous paragraph. $\diamond$
\end{example}

With this as motivation, we now define our combinatorial matrices, cf.~\cite[\S4.2]{RSS}.

\begin{definition}\label{def:combinatorial-mats}
    Let $S=k[x_1,\dots,x_n]$. Let $I\subset K\subset S$ and $J\subset L\subset S$ be monomial ideals with $\dim_k S/I<\infty$ and $\dim_k S/J<\infty$. Given an isomorphism $\phi\colon K/I\xrightarrow{\simeq} L/J$ of $S$-modules sending monomials to monomials, we obtain an $S$-module
    \[
    M=(S/I\oplus S/J)/\langle\,(f,-\phi(f))\mid f\in S/I\,\rangle
    \]
    equipped with the natural monomial basis. Letting $d=\dim M$ and $A_i$ be the $d\times d$ matrix given by the linear map $M\xrightarrow{\cdot x_i}M$, we say $(A_1,\dots,A_n)$ are \emph{associated to} $(I,J,K,L,\phi)$.
\end{definition}

\begin{remark}\label{rmk:combinatorial-examples-have-r=2}
    With notation as in Definition \ref{def:combinatorial-mats}, by construction $(A_1,\dots,A_n)$ pairwise commute and require at most two generating vectors, namely $(1,0), (0,1)\in M$. In Section \ref{sec:review-combinatorial-approach}, we review how to think of such $(A_1,\dots,A_n)$ as coming from $n$-dimensional partition shapes.
\end{remark}

We wish to emphasize that, despite the simplicity of this definition, this case is already non-trivial. 
Indeed, letting $\fm:=(x_1,\dots,x_n)$, note that in Example \ref{ex:4varcounter-example}, we have $K/I\simeq(S/\fm)^{\oplus 2}$. Thus, even if one restricts attention to examples where $K/I$ is as simple as possible, namely $(S/\fm)^{\oplus c}$, we already obtain a broad enough class to encompass the standard example of four pairwise commuting $d\times d$ matrices $(A_1,\dots,A_4)$ with $\dim_k\cA>d$.




\begin{definition}\label{def:Gerstenhaber-problem-gluing}
    Let $S=k[x_1,\dots,x_n]$ and $N$ be an $S$-module. 
    We say the \emph{combinatorial Gerstenhaber problem holds when gluing along} $N$ if for all $(I,J,K,L,\phi)$ as in Definition \ref{def:combinatorial-mats} with $K/I\simeq N$, we have
    \[
    \dim_k\cA\leq\dim M,
    \]
    where $(A_1,\dots,A_n)$ is associated to $(I,J,K,L,\phi)$ and $\cA$ is the algebra generated by $(A_1,\dots,A_n)$.
\end{definition}

In this paper, we prove:

\begin{theorem}\label{thm:main-intro}
    If $S=k[x_1,x_2,x_3]$ and $N=\bigoplus_i S/(x_1,x_2,x_3^{n_i})$, then the combinatorial Gerstenhaber problem holds when gluing along $N$.
\end{theorem}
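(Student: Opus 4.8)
We need to bound $\dim_k\cA$ where $\cA$ is generated by multiplication operators $A_1,A_2,A_3$ on the glued module $M=(S/I\oplus S/J)/\langle(f,-\phi(f))\rangle$, with $K/I\cong N=\bigoplus_i S/(x_1,x_2,x_3^{n_i})$. I would start by translating everything into the language of $n$-dimensional partition shapes promised in Section~\ref{sec:review-combinatorial-approach}: the monomials not in $I$ form a staircase diagram $\lambda$ in $\Z_{\geq0}^3$, similarly $\mu$ for $J$, and $M$ is obtained by gluing $\lambda$ to $\mu$ along the subdiagram corresponding to $K/I\cong L/J$. The hypothesis that $K/I\cong\bigoplus_i S/(x_1,x_2,x_3^{n_i})$ means the glued-along locus is a disjoint union of ``columns'' in the $x_3$-direction, each of finite length $n_i$, each column being a free orbit of $x_3$ annihilated by $x_1,x_2$. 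This is the combinatorial heart of the hypothesis and I would record precisely which monomials of $\lambda$ (and of $\mu$) are identified.

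\begin{proof}[Proof sketch]
The strategy is to produce an explicit spanning set for $\cA$ of size at most $\dim_k M=|\lambda|+|\mu|-\sum_i n_i$. A general element of $\cA$ is a polynomial $p(A_1,A_2,A_3)$, i.e.\ the operator ``multiplication by $\bar p$'' on $M$ for $p\in S$; thus $\cA\cong S/\Ann_S(M)$, and $\Ann_S(M)=\Ann_S(S/I)\cap\Ann_S(S/J)\cap(\text{extra relations forced by the gluing})=I\cap J\cap(\text{something})$. More usefully, since $M$ surjects onto both $S/I$ and $S/J$ (quotient by the image of the other summand), and these two maps are jointly injective on $\cA$ when the gluing locus is ``small,'' I would first treat the cheap case $N=0$ (no gluing), where $M=S/I\oplus S/J$, $\cA=S/(I\cap J)$, and $\dim_k S/(I\cap J)\leq\dim_k S/I+\dim_k S/J=\dim_k M$ follows from the exact sequence $0\to S/(I\cap J)\to S/I\oplus S/J\to S/(I+J)\to 0$. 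The content of the theorem is controlling how much $\dim_k\cA$ can exceed $\dim_k S/(I\cap J)$ once the gluing identifications are imposed; equivalently, controlling $\dim_k S/(I+J)$ from above by the ``defect'' $\sum_i n_i$ introduced by the gluing.

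\emph{Key steps, in order.} (1) Reduce to the connected case: show that if $K/I$ splits as a direct sum corresponding to a decomposition of the gluing data into independent blocks, then $\cA$ and $M$ split compatibly, so it suffices to handle a single column $N=S/(x_1,x_2,x_3^{m})$ glued in $k[x_1,x_2,x_3]$. (2) For a single column: the submodule $K/I\subset S/I$ is spanned by monomials $x_3^{a},x_3^{a+1},\dots,x_3^{a+m-1}$ (lying along the $x_3$-axis of $\lambda$, after a monomial change of coordinates there is no loss in assuming they sit on the axis or a parallel line annihilated by $x_1,x_2$), and similarly in $S/J$; the isomorphism $\phi$ matches $x_3^{a+j}\leftrightarrow x_3^{b+j}$. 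Write down the monomial basis of $M$ explicitly and the matrices $A_1,A_2,A_3$. (3) Bound $\dim_k\cA=\dim_k S/\Ann_S(M)$. Here I would argue that $\Ann_S(M)$ contains $I\cap J$ together with the monomials forced to act as zero on the glued column but possibly nonzero on one of $S/I$ or $S/J$; the upshot is an exact-sequence/inclusion–exclusion bound $\dim_k\cA\leq \dim_k(S/I)+\dim_k(S/J)-\dim_k(\text{glued piece})-\bigl(\dim_k S/(I+J)-\dim_k(\text{glued piece})\bigr)$, and the last parenthesized quantity is exactly what one must show is $\geq0$, i.e.\ that the ideal-theoretic intersection ``sees'' the whole glued column.

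\emph{Main obstacle.} The delicate point is step (3): bounding $\dim_k S/(I+J)$, equivalently showing the gluing does not create ``phantom'' algebra elements beyond those accounted for by the $n_i$ collapsed dimensions. Unlike the $N=0$ case, here $I$ and $J$ are not arbitrary — they are constrained by the existence of the monomial isomorphism $\phi\colon K/I\to L/J$ forcing the two shapes $\lambda,\mu$ to agree along a prescribed $x_3$-column. I expect to need a careful monomial-ideal computation of $I+J$ (or of the socle / the relevant Hilbert function) exploiting that the glued locus is one-dimensional and $x_1,x_2$ act as zero on it: this rigidity should force every monomial in $(x_1,x_2)\cdot\langle\text{glued monomials}\rangle$ to already lie in $I+J$, which is what pins down the bound. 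Organizing this as a clean inductive argument on $\dim_k M$ (peeling off one monomial from the socle of $M$, as in the linear-algebraic proofs of Gerstenhaber's theorem cited in the introduction, e.g.\ \cite{BH,LL}) is, I suspect, the cleanest route and would be my primary attempt; the combinatorial specialness of $N$ is precisely what makes the socle-peeling induction close.
\end{proof}
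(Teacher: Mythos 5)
Your setup is on target: reformulating $\cA$ as $S/\Ann_S(M)$, computing $\dim_k\cA=\dim_k S/I+\dim_k S/J-\dim_k S/(I+J)$, and reducing the theorem to the inequality $\dim_k S/(I+J)\geq\sum_i n_i$ is exactly the reduction the paper carries out (Proposition \ref{prop:combinatorial-Gerstenhaber-Young-diagram} and Theorem \ref{thm:main}, where $\dim_k S/(I+J)=|\lambda\cap\mu|$ and $\sum_i n_i=|\nu|$). But your step (1) --- reducing to a single column because ``$\cA$ and $M$ split compatibly'' when $K/I$ splits --- is false, and it is precisely where the entire difficulty of the theorem lives. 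The module $M$ is a single pushout of $S/I$ and $S/J$ along \emph{all} the columns simultaneously; it admits no decomposition indexed by the summands of $K/I$, and $\Ann_S(M)=I\cap J$ does not see the decomposition at all. A concrete refutation is Example \ref{ex:4varcounter-example}: there $K/I\simeq(S/\fm)^{\oplus 2}$, the single-summand case $N=S/\fm$ satisfies the bound trivially, yet the two-summand gluing violates it ($\dim_k\cA=5>4$). Your splitting argument, if valid, would apply verbatim in four variables and prove a false statement; so whatever makes the three-variable case work cannot be a column-by-column reduction. The actual content is a global constraint: the columns $\nu_i$ sit at \emph{different} positions $b_i$ in $\lambda$ and $c_i$ in $\mu$, and one must show that their combined presence forces $\lambda$ and $\mu$ to overlap in at least $\sum_i n_i$ boxes.

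With step (1) gone, your steps (2)--(3) address only the single-column case (which is easy: after passing to the order ideal generated by the column, $\lambda$ and $\mu$ become boxes whose intersection visibly has at least $n_1$ cells), and the ``main obstacle'' paragraph offers only a hope (``socle-peeling induction'', ``a careful monomial-ideal computation'') rather than a mechanism for the multi-column interaction. Note also that $I$ and $J$ are essentially unconstrained relative to one another --- the only link is that each contains translated copies of the same columns touching its boundary --- so there is no rigidity statement of the form ``$(x_1,x_2)\cdot\langle\text{glued monomials}\rangle\subseteq I+J$'' to exploit; the glued monomials of $S/I$ and of $S/J$ are different monomials of $S$. The paper instead passes to a minimal potential counter-example (after replacing $\lambda,\mu$ by the order ideals generated by the glued columns, and projecting to two-dimensional ``floor plans'' recording the positions and lengths of the columns), proves structural constraints on such a minimal example (every border cell of the support carries a column, the two position sets are disjoint, the supports are nested), and then derives a contradiction by decrementing the column lengths at the maximal cells of the larger support. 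Some argument of this global, extremal nature --- or another device that genuinely couples the columns across the two diagrams --- is what your proposal is missing.
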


Theorem \ref{thm:main-intro} generalizes \cite[Theorem 4]{RSS}, where the result was shown when all $n_i=1$. The proof given in \cite{RSS}, however, does not lend itself to generalization and a new idea was required. 
Our proof of Theorem \ref{thm:main-intro} uses a series of reductions to turn this three-dimensional problem into a two-dimensional one using objects that we call floor plans. We ultimately prove the main theorem by constraining the shape that such floor plans can assume.

\begin{acknowledgements}
This paper is the outcome of an NSERC-USRA project; we thank NSERC for their support. 
This project benefited greatly from conversations with Oliver Pechenik and Jenna Rajchgot; it is a pleasure to thank them. The second author would especially like to thank his son, Quinn, for letting him use his toy blocks, which helped with three-dimensional visualization.
\end{acknowledgements}

\section{Combinatorial Gerstenhaber problem and Young diagrams}
\label{sec:review-combinatorial-approach}

In this section, we reduce Theorem \ref{thm:main-intro} to a problem in combinatorics.
Much of the material in this section is based on \cite[\S4.1--4.2]{RSS}.

To begin, the monomials in $S:=k[x_1,\dots,x_n]$ are in bijection with elements of $\N^n$ by identifying $a=(a_1,\dots,a_n)\in\N^n$ with $x^a:=x_1^{a_1}\cdots x_n^{a_n}$. Recall that an \emph{$n$-dimensional Young diagram} (also known as a \emph{standard set} or \emph{staircase diagram}) is a finite subset $\lambda\subset\N^n$ such that for all $v,w\in\N^n$ with $v\leq w$ (in the standard partial order on $\N^n$), if $w\in\lambda$ then $v\in\lambda$. 
Given a monomial ideal $I\subset S$ with $\dim_k S/I<\infty$, we obtain an $n$-dimensional Young diagram $\lambda\subset\N^n$ given by the set of $a\in\N^n$ with $x^a\notin I$; moreover, this yields an inclusion-reversing bijection between such monomial ideals and $n$-dimensional Young diagrams, see, e.g., \cite[Chapter 3]{MS} for further details.

Next, if $I$ is as above and $I\subset K$ with $K$ a monomial ideal, then let $\nu\subset\N^n$ be the set of $a\in\N^n$ with $x^a\in K/I$. We see $\nu=\lambda\setminus\lambda'$ where $\lambda$ (resp.~$\lambda'$) is the $n$-dimensional Young diagram associated to $I$ (resp.~$K$). Sets obtained as the difference of two $n$-dimensional Young diagrams are referred to as \emph{skew shapes}. We say a skew shape $\epsilon$ is \emph{connected} if for any $a,b\in\epsilon$, there exist $a',b'\in\N^n$ such that $a+a'=b+b'\in\epsilon$. Every skew shape can be written uniquely as a disjoint union of connected ones, which we refer to as its \emph{connected components}.

\begin{lemma}\label{l:skew-shape-monomial-iso}
    Let $S=k[x_1,\dots,x_n]$. Let $I\subset K\subset S$ and $J\subset L\subset S$ be monomial ideals with $\dim_k S/I<\infty$ and $\dim_k S/J<\infty$. Let $\nu$ (resp.~$\epsilon$) be the skew shape associated with $K/I$ (resp.~$L/J$). Let $\nu=\nu_1\amalg\dots\amalg\nu_m$ and $\epsilon=\epsilon_1\amalg\dots\amalg\epsilon_\ell$ be the decompositions into connected components.

    Every isomorphism $\phi\colon K/I\xrightarrow{\simeq} L/J$ of $S$-modules sending monomials to monomials is given as follows. We have a bijection $\sigma\colon\{1,\dots,m\}\to\{1,\dots,\ell\}$ and an element $c_i\in\Z^n$ such that $\nu_i+c_i=\epsilon_{\sigma(i)}$. The map $\phi$ is then given by $\phi(x^a)=x^{a+c_i}$ for all $a\in\nu_i$.
\end{lemma}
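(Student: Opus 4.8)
The plan is to strip $\phi$ down to a bijection of lattice points and then let the $S$-module structure force that bijection to be a translation on each connected component. The monomial classes $\{x^a\mid a\in\nu\}$ and $\{x^b\mid b\in\epsilon\}$ are $k$-bases of $K/I$ and $L/J$, so since $\phi$ is a bijection carrying monomials to monomials, for each $a\in\nu$ we must have $\phi(x^a)=x^{\pi(a)}$ for some $\pi(a)\in\epsilon$, and bijectivity of $\phi$ makes $\pi\colon\nu\to\epsilon$ a bijection with $\phi^{-1}(x^b)=x^{\pi^{-1}(b)}$. Thus the whole content of the lemma is a statement about $\pi$, and the claimed description of $\phi$ follows once we know $\pi$ is a translation on each $\nu_i$ with $\nu_i+c_i=\epsilon_{\sigma(i)}$.

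The next step is to extract the governing relation from $S$-linearity. Writing $\lambda,\lambda'$ for the Young diagrams of $I,K$ (so $\nu=\lambda\setminus\lambda'$) and using that $K$ is an ideal, one checks that in $K/I$ one has $x_j\cdot x^a=x^{a+e_j}$ when $a+e_j\in\nu$ and $x_j\cdot x^a=0$ otherwise, and likewise in $L/J$. Applying $\phi$ to $x_j\cdot x^a$, comparing the two descriptions, and running the same argument for $\phi^{-1}$, gives for all $a\in\nu$ and all $j$ that
\[
a+e_j\in\nu\iff\pi(a)+e_j\in\epsilon,\qquad\text{and in that case}\qquad\pi(a+e_j)=\pi(a)+e_j.
\]
Iterating this along monotone lattice paths, and using the elementary fact that a monotone path between two points of a skew shape $\lambda\setminus\lambda'$ stays inside it (a point $p$ with $a\le p\le c$ lies in $\lambda$ by downward closedness and outside $\lambda'$ since $a\notin\lambda'$), one upgrades this to the global form: for all $a\in\nu$ and $w\in\N^n$,
\[
a+w\in\nu\iff\pi(a)+w\in\epsilon,\qquad\text{and in that case}\qquad\pi(a+w)=\pi(a)+w.
\]

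From here the lemma follows quickly. If $a,b$ lie in one connected component $\nu_i$, choosing a common upper bound $c\in\nu_i$, say $c=a+w_a=b+w_b$, the displayed relation gives $\pi(a)+w_a=\pi(c)=\pi(b)+w_b$, hence $\pi(a)-a=\pi(b)-b$. So $\pi(a)-a$ equals a constant $c_i\in\Z^n$ on $\nu_i$, which yields $\phi(x^a)=x^{a+c_i}$ and $\pi(\nu_i)=\nu_i+c_i$. The same relation shows that $a,b\in\nu$ admit a common upper bound in $\nu$ precisely when $\pi(a),\pi(b)$ admit one in $\epsilon$ (using $\pi$ one way and $\pi^{-1}$ the other), so $\pi$ respects the relation generating the connected-component partitions and therefore induces a bijection $\sigma\colon\{1,\dots,m\}\to\{1,\dots,\ell\}$ with $\pi(\nu_i)=\epsilon_{\sigma(i)}$, i.e.\ $\nu_i+c_i=\epsilon_{\sigma(i)}$.

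I expect the only genuinely delicate point to be this last step: making precise that $\pi$ \emph{permutes} the connected components, rather than a priori mapping one component into a larger one or splitting one apart. This is exactly where the compatibility between the abstract notion of connectedness and the edge-move combinatorics of the displayed relations has to be checked, and the monotone-path observation is what bridges the two. Everything else is bookkeeping with monomial bases and the ideal structure.
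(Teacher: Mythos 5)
Your proof is correct and follows essentially the same route as the paper's: both strip $\phi$ down to a bijection $\pi$ on lattice points and use $S$-linearity at a common upper bound $a+w_a=b+w_b\in\nu_i$ (the definition of connectedness) to force $\pi(a)-a$ to be constant on each component and to show $\pi$ carries components to components. The only difference is that the paper obtains $\pi(a)+w=\pi(a+w)$ in a single stroke from $x^{w}\phi(x^a)=\phi(x^{a+w})\neq0$, whereas you rebuild it from single steps $e_j$ iterated along a monotone path inside $\nu$; that detour is sound but not needed.
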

\begin{proof}
Since $\phi$ is an $S$-module isomorphism and induces a bijection from monomials of $K/I$ to monomials of $L/J$, we see $\phi(x^a)=x^{\psi(a)}$, where $\psi\colon\nu\to\epsilon$ is an isomorphism of posets.

Thus, we need only prove that if $a$ and $b$ are in the same connected component of $\nu$, then $\psi(a)$ and $\psi(b)$ are in the same connected component of $\epsilon$. To see this, say $a,b\in\nu_i$ and let $a',b'\in\N^n$ such that $a+a'=b+b'\in\nu_i$. Then
\[
x^{a'+\psi(a)}=x^{a'}\phi(x^a)=\phi(x^{a+a'})=\phi(x^{b+b'})=x^{b'}\phi(x^b)=x^{b'+\psi(b)}
\]
in $L/J$. Since $\phi$ is an isomorphism and $a+a'\in\nu_i$, we know $\phi(x^{a+a'})\neq0$. Thus, $a'+\psi(a)=b'+\psi(b)$. Since $x^{a'+\psi(a)}=\phi(x^{a+a'})\neq0$, we see $a'+\psi(a)$ is in the same connected component as $\psi(a)$; similarly, $b'+\psi(b)$ is in the same connected component as $\psi(b)$. It follows that $\psi(a)$ and $\psi(b)$ are in the same connected component of $\epsilon$.
\end{proof}

In light of Lemma \ref{l:skew-shape-monomial-iso}, we say two skew shapes $\nu,\epsilon\subset\N^n$ are \emph{translationally equivalent} if there exists $w\in\Z^n$ such that $\nu+w=\epsilon$. Note that this defines an equivalence relation on skew shapes. Note that each skew shape $\nu$ has a unique lex-smallest point $\ell_\nu$. We may therefore normalize our skew shapes by considering $\nu-\ell_\nu$; we refer to this translated skew shape as an \emph{abstract skew shape}. Thus, every skew shape is of the form $\alpha+w$ where $\alpha$ is an abstract skew shape, $w\in\Z^n$, and $\alpha+w\subset\N^n$. Note that we have natural notions of connectedness and connected components for abstract skew shapes.

We can now reinterpret the data in Definition \ref{def:combinatorial-mats} purely combinatorially.

\begin{corollary}\label{cor:combinatorial-mats-Young-diagrams}
Let $S=k[x_1,\dots,x_n]$. Giving $(I,J,K,L,\phi)$ as in Definition \ref{def:combinatorial-mats} is equivalent to giving
\begin{enumerate}
    \item\label{cor:combinatorial-mats-Young-diagrams::datastart} $n$-dimensional Young diagrams $\lambda$ and $\mu$,
    \item connected abstract skew shapes $\nu_1,\dots,\nu_r$, and
    \item\label{cor:combinatorial-mats-Young-diagrams::dataend} lattice points $b_1,\dots,b_r,c_1,\dots,c_r\in\Z^n$
\end{enumerate}
such that
\begin{enumerate}[label=(\alph*)]
    \item\label{cor:combinatorial-mats-Young-diagrams::propertystart}$\nu_1+b_1,\dots,\nu_r+b_r$ are disjoint and contained in $\lambda$,
    \item\label{cor:combinatorial-mats-Young-diagrams::propertyideal} for all $w\in\N^n$ and all $v_i\in\nu_i$ such that $v_i+b_i+w\in\lambda$, we have $v_i+w\in\nu_i$,
    \item $\nu_1+c_1,\dots,\nu_r+c_r$ are disjoint and contained in $\mu$, and
    \item\label{cor:combinatorial-mats-Young-diagrams::propertyend} for all $w\in\N^n$ and all $v_i\in\nu_i$ such that $v_i+c_i+w\in\mu$, we have $v_i+w\in\nu_i$.
\end{enumerate}
Moreover, under this equivalence, $\lambda$ (resp.~$\mu$) is the $n$-dimensional Young diagram corresponding to $I$ (resp.~$J$), and $\nu_i+b_i$ (resp.~$\nu_i+c_i$) are the connected components of the skew shape associated to $K/I$ (resp.~$L/J$).
\end{corollary}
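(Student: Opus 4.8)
The plan is to unwind both directions of the claimed equivalence through the dictionary between monomial ideals and $n$-dimensional Young diagrams, together with \autoref{l:skew-shape-monomial-iso}. Recall that $\dim_k S/I<\infty$ means $\lambda$ is finite, and that $I\subset K$ corresponds to $\lambda'\subset\lambda$, so that $K/I$ has $k$-basis $\{x^a\mid a\in\lambda\setminus\lambda'\}$. Writing the skew shape $\lambda\setminus\lambda'$ as a disjoint union of connected components and normalizing each component to an abstract skew shape produces the data $\nu_i,b_i$ with $\nu_i+b_i$ the $i$th component; similarly $\mu,\nu_i,c_i$ encode $(J,L)$ once we use the bijection $\sigma$ from \autoref{l:skew-shape-monomial-iso} to match up the components (after relabeling, we may assume $\sigma=\id$, which is why a single family $\nu_1,\dots,\nu_r$ suffices on both sides, with $b_i,c_i$ being the two translation vectors).

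For the forward direction, I would start from $(I,J,K,L,\phi)$. The ideals $I$ and $J$ give $\lambda$ and $\mu$ via the standard inclusion-reversing bijection. The inclusions $I\subset K$ and $J\subset L$ give skew shapes; \autoref{l:skew-shape-monomial-iso} says $\phi$ matches their connected components up to a bijection $\sigma$ and translations, so after reindexing the components of $L/J$ by $\sigma^{-1}$ we obtain a common list of connected abstract skew shapes $\nu_1,\dots,\nu_r$ and lattice vectors $b_i$ (placing $\nu_i$ inside $\lambda$) and $c_i$ (placing $\nu_i$ inside $\mu$). Disjointness and containment in \ref{cor:combinatorial-mats-Young-diagrams::propertystart} and the third property are immediate since connected components of a skew shape are disjoint and the skew shape is contained in the Young diagram. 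Properties \ref{cor:combinatorial-mats-Young-diagrams::propertyideal} and \ref{cor:combinatorial-mats-Young-diagrams::propertyend} are exactly the statement that $K$ (resp.\ $L$) is an ideal: if $v_i+b_i\in K/I$-support and $v_i+b_i+w$ still lies in $\lambda$ (i.e.\ $x^{v_i+b_i+w}\notin I$), then $x^{v_i+b_i+w}=x^w\cdot x^{v_i+b_i}\in K$, so $v_i+b_i+w$ lies in the skew shape $\lambda\setminus\lambda'$, and a short connectedness argument (as in the proof of \autoref{l:skew-shape-monomial-iso}) forces it into the same component $\nu_i+b_i$, i.e.\ $v_i+w\in\nu_i$. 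Conversely, given data satisfying \ref{cor:combinatorial-mats-Young-diagrams::propertystart}--\ref{cor:combinatorial-mats-Young-diagrams::propertyend}, I would define $I,J$ from $\lambda,\mu$, define $K$ (resp.\ $L$) to be the ideal generated by $I$ (resp.\ $J$) together with the monomials $x^{v_i+b_i}$ (resp.\ $x^{v_i+c_i}$), and define $\phi$ on the monomial basis by $x^{v_i+b_i}\mapsto x^{v_i+c_i}$. One then checks that $K/I$ has precisely $\{x^{v_i+b_i}\}$ as its monomial basis — this is where properties \ref{cor:combinatorial-mats-Young-diagrams::propertyideal} and disjointness are used to see that no extra monomials creep in and that the listed ones are nonzero and distinct in $K/I$ — and that $\phi$ is a well-defined $S$-module isomorphism, again using the ideal conditions to verify compatibility with multiplication by each $x_j$.

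The two constructions are mutually inverse essentially by construction, so the main content — and the step I expect to require the most care — is verifying that properties \ref{cor:combinatorial-mats-Young-diagrams::propertyideal} and \ref{cor:combinatorial-mats-Young-diagrams::propertyend} are equivalent to $K$ and $L$ being \emph{ideals} (as opposed to merely $S$-submodules of $S$ containing $I$, which they automatically are), and that they guarantee the would-be isomorphism $\phi$ is well-defined and $S$-linear. Concretely, the subtlety is that $\nu_i+b_i$ being a connected component means it is ``closed upward within $\lambda$'': moving up by any $w\in\N^n$ either leaves $\lambda$ entirely or stays within the same component, never jumping to a different component or into $\lambda'$. Property \ref{cor:combinatorial-mats-Young-diagrams::propertyideal} is the precise combinatorial encoding of this, and translating between the ``ideal'' and ``skew shape'' formulations is the only place where one must be careful; the rest is bookkeeping with the monomial-to-monomial bijection supplied by \autoref{l:skew-shape-monomial-iso}. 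The final sentence of the corollary (identifying $\lambda,\mu$ with $I,J$ and the $\nu_i+b_i,\nu_i+c_i$ with the connected components) is recorded automatically along the way.
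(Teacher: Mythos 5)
Your proposal is correct and follows essentially the same route as the paper: translate $I,J$ to Young diagrams, use \autoref{l:skew-shape-monomial-iso} to match connected components up to translation (yielding the common $\nu_i$ with vectors $b_i,c_i$), observe that properties \ref{cor:combinatorial-mats-Young-diagrams::propertyideal} and \ref{cor:combinatorial-mats-Young-diagrams::propertyend} encode the ideal condition on $K$ and $L$, and in the converse direction build $K$, $L$ as $I$, $J$ plus the span of the indicated monomials and define $\phi$ by the translations $c_i-b_i$. Your spelled-out verification of \ref{cor:combinatorial-mats-Young-diagrams::propertyideal} (that $x^{v_i+b_i+w}\in K\setminus I$ and connectedness forces it into the same component) is if anything slightly more careful than the paper's one-line version of that step.
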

\begin{proof}
    If $(I,J,K,L,\phi)$ are as in Definition \ref{def:combinatorial-mats}, let $\lambda$ (resp.~$\mu$) be the $n$-dimensional Young diagram corresponding to $I$ (resp.~$J$). Let $\beta$ (resp.~$\gamma$) be the skew shape corresponding to $K/I$ (resp.~$L/J$). By Lemma \ref{l:skew-shape-monomial-iso}, we may order the connected components $\beta_1,\dots,\beta_r$ of $\beta$ and $\gamma_1,\dots,\gamma_r$ of $\gamma$ such that $\beta_i$ and $\gamma_i$ are translationally equivalent. Letting $\nu_i$ be the abstract skew shape associated to $\beta_i$, we then see there are $b_1,\dots,b_r,c_1,\dots,c_r\in\Z^n$ such that $\beta_i=\nu_i+b_i$ and $\gamma_i=\nu_i+c_i$. Lastly, note that if $v_i\in\nu_i$, $w\in\N^n$, and $v_i+b_i+w\in\lambda$, then $x^{v_i+b_i+w}$ is not in $I$ hence also not in $K$, i.e., $v_i+b_i+w\notin\nu_i+b_i$, so $v_i+w\notin\nu_i$.

    Conversely, given data in (\ref{cor:combinatorial-mats-Young-diagrams::datastart})--(\ref{cor:combinatorial-mats-Young-diagrams::dataend}) satisfying properties \ref{cor:combinatorial-mats-Young-diagrams::propertystart}--\ref{cor:combinatorial-mats-Young-diagrams::propertyend}, let $I$ (resp.~$J$) be the monomial ideal corresponding to $\lambda$ (resp.~$\mu$). Let $K'$ be the set of monomials $x^a$ such that $a\in\nu_i+b_i$ for some $i$; let $K''$ be linear combinations of the monomials in $K'$. Then $K:=K''+I$ is a monomial ideal containing $I$, and property \ref{cor:combinatorial-mats-Young-diagrams::propertyideal} shows that $K'$ forms a monomial basis for $K/I$. Thus, $\coprod_i(\nu_i+b_i)$ is the skew shape corresponding to $K/I$ with connected components $\nu_i+b_i$. Similarly, we obtain a monomial ideal $L\supset J$ whose such that the skew shape corresponding to $L/J$ has connected components $\nu_i+c_i$. The lattice points $c_i-b_i\in\Z^n$ then yield an $S$-module isomorphism $\phi\colon K/I\xrightarrow{\simeq} L/J$ as in Lemma \ref{l:skew-shape-monomial-iso}.
\end{proof}

\begin{example}
\label{Ex:2d-pair}
Consider the following example (a variant on \cite[Example 4]{RSS}). We have $2$-dimensional Young diagrams (i.e., partitions) $\lambda$ and $\mu$ corresponding respectively to the ideals $I=(x^5,x^4y,x^2y^3,xy^4,y^5)$ and $J=(x^7,x^6y,x^3y^3,x^2y^4,y^5)$.
\begin{center}
    $\lambda =$ \begin{ytableau}
    *(lightgray)\\
    *(lightgray) & *(lightgray)\\
    \; & & *(lightgray) & *(lightgray)\\
    \; & & & *(lightgray)\\
    \; & & & & *(lightgray)
    \end{ytableau}
    $\mu =$ \begin{ytableau}
    \; & *(lightgray)\\
    \; & *(lightgray) & *(lightgray)\\
    \; & & & & *(lightgray)& *(lightgray)\\
    \; & & & & & *(lightgray)\\
    \; & & & & & & *(lightgray)
    \end{ytableau}
\end{center}
The grey regions represent the connected skew shapes $\nu_1,\nu_2,\nu_3$. These abstract skew shapes are depicted as follows.
\begin{center}
    $\nu_1,\nu_2,\nu_3 =$ 
    \begin{ytableau}
        \; \\ \; & 
    \end{ytableau}\ ,
    \begin{ytableau}
        \; &  \\ \none &
    \end{ytableau}\ ,
    \begin{ytableau}
        \;
    \end{ytableau}.
\end{center}
Here $K=(y^3,x^2y^2,x^3y,x^4)$ and $L=(xy^3,x^4y^2,x^5y,x^6)$. $\diamond$
\end{example}

Having now understood Definition \ref{def:combinatorial-mats} combinatorially, the next result gives a combinatorial reinterpretation of the Gerstenhaber problem, cf.~\cite[Equation (6)]{RSS}.

\begin{proposition}\label{prop:combinatorial-Gerstenhaber-Young-diagram}
Keep the notation of Definition \ref{def:combinatorial-mats} and let $\cA$ be the algebra generated by $(A_1,\dots,A_n)$. Let $\lambda$ (resp.~$\mu$) be the $n$-dimensional Young diagram corresponding to $I$ (resp.~$J$). Let $\nu$ be the skew shape associated to $K/I$. Then
\[
\dim_k\cA\leq d\quad\Longleftrightarrow\quad |\nu|\leq |\lambda\cap\mu|.
\]  
\end{proposition}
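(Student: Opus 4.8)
The plan is to compute $\dim_k\cA$ and $d$ separately in terms of sizes of Young diagrams, whereupon the claimed equivalence becomes an instance of inclusion--exclusion.

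For $d$, observe that the submodule $\Gamma=\langle\,(f,-\phi(f))\mid f\in K/I\,\rangle\subseteq S/I\oplus S/J$ is already an $S$-submodule, being the image of the $S$-linear map $K/I\to S/I\oplus S/J$, $f\mapsto(f,-\phi(f))$. This map is injective (composing with the first projection recovers the inclusion $K/I\hookrightarrow S/I$), so $\dim_k\Gamma=\dim_k K/I=|\nu|$. Since $\dim_k(S/I\oplus S/J)=|\lambda|+|\mu|$, we get $d=\dim_k M=|\lambda|+|\mu|-|\nu|$.

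For $\dim_k\cA$, I would use that the substitution $x_i\mapsto A_i$ defines a $k$-algebra homomorphism $S\to\operatorname{End}_k(M)$ with image $\cA$ and kernel $\Ann_S(M)$, so $\cA\cong S/\Ann_S(M)$. Since $M$ is generated over $S$ by the classes of $(1,0)$ and $(0,1)$ (Remark \ref{rmk:combinatorial-examples-have-r=2}), we have $\Ann_S(M)=\Ann_S((1,0))\cap\Ann_S((0,1))$, and the key step is to identify these two annihilators as exactly $I$ and $J$. For $(1,0)$: a polynomial $g$ kills its class in $M$ iff $(g,0)\in\Gamma$, i.e.\ iff $g=f$ in $S/I$ and $\phi(f)=0$ in $S/J$ for some $f\in K/I$; injectivity of $\phi$ forces $f=0$, hence $g\in I$, and the converse is immediate. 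The computation $\Ann_S((0,1))=J$ is entirely analogous (and only uses that $\phi$ is a well-defined $k$-linear map). Therefore $\cA\cong S/(I\cap J)$.

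It then remains to translate into Young diagrams: since the correspondence between monomial ideals of $S$ and $n$-dimensional Young diagrams is inclusion-reversing, $I\cap J$ corresponds to $\lambda\cup\mu$, so $\dim_k\cA=|\lambda\cup\mu|=|\lambda|+|\mu|-|\lambda\cap\mu|$ by inclusion--exclusion. Comparing with $d=|\lambda|+|\mu|-|\nu|$ yields $\dim_k\cA\le d\iff|\nu|\le|\lambda\cap\mu|$. The only point demanding genuine care is the identification $\Ann_S(M)=I\cap J$ --- concretely, checking that $\Gamma$ meets the summand $S/I\oplus 0$ trivially inside $S/I\oplus S/J$ --- and this is exactly where the injectivity of $\phi$ (part of the hypothesis that it is an isomorphism) is used; everything else is a dimension count.
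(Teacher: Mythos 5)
Your argument is correct and follows essentially the same route as the paper's proof, which computes $d=|\lambda|+|\mu|-|\nu|$ and $\dim_k\cA=\dim_k S/\Ann(M)=|\lambda\cup\mu|$ and then applies inclusion--exclusion; the only difference is that the paper cites \cite[Proposition 1 and Lemma 8]{RSS} for the identifications $\cA\cong S/\Ann(M)$ and $\Ann(M)=I\cap J$, whereas you prove them directly (correctly noting where injectivity of $\phi$ is used). Your self-contained verification of those cited facts is accurate, so there is nothing to fix.
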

\begin{proof}
Let $M$ be as in Definition \ref{def:combinatorial-mats}. By \cite[Proposition 1]{RSS}, $\dim_k\cA\leq d$ if and only if $\dim_k S/\Ann(M)\leq d$. Lemma 8 and the paragraph afterwards in (loc.~cit) shows that $\Ann(M)=|\lambda\cup\mu|$. Thus,
\begin{align*}
d-\dim_k\cA &=\dim(M)-|\lambda\cup\mu|\\
&=|\lambda|+|\mu|-|\nu|-|\lambda\cup\mu|=|\lambda\cap\mu|-|\nu|.\qedhere
\end{align*}
\end{proof}

\begin{remark}
    Whether or not the inequality $\dim_k\cA\leq d$ holds is independent of the choice of $\phi$, as Proposition \ref{prop:combinatorial-Gerstenhaber-Young-diagram} shows. Correspondingly, via Corollary \ref{cor:combinatorial-mats-Young-diagrams}, whether or not the inequality holds is independent of the choice of the $b_i$ and $c_i$.
\end{remark}

We end this section by specializing to our case of interest. 

\begin{definition}\label{def:tower}
    We say $(\lambda,\nu,b)$ is a \emph{tower} if $\lambda$ is a $3$-dimensional Young diagram, $\nu=(\nu_1,\dots,\nu_r)$ where each $\nu_i$ is an abstract skew shape of the form $\{(0,0,z)\mid 0\leq z<n_i,\, z\in\N\}$, and $b_i\in\Z^n$ satisfying properties \ref{cor:combinatorial-mats-Young-diagrams::propertystart} and \ref{cor:combinatorial-mats-Young-diagrams::propertyideal} of Corollary \ref{cor:combinatorial-mats-Young-diagrams}. We say $(\lambda,\mu,\nu,b,c)$ is a \emph{compatible tower} if $(\lambda,\nu,b)$ and $(\mu,\nu,c)$ are towers. We define $|\nu|:=\sum_{i=1}^r|\nu_i|$ and frequently refer to $\nu_i$ as a $1\times1\times n_i$ shape.
\end{definition}

By Corollary \ref{cor:combinatorial-mats-Young-diagrams} and Proposition \ref{prop:combinatorial-Gerstenhaber-Young-diagram}, we see Theorem \ref{thm:main-intro} is equivalent to

\begin{theorem}\label{thm:main}
If $(\lambda,\mu,\nu,b,c)$ is a compatible tower, then
\[
|\nu|\leq |\lambda\cap\mu|.
\]
\end{theorem}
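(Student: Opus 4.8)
\medskip\noindent\textbf{Proposed approach.} The plan is to prove Theorem~\ref{thm:main} in four steps: reduce to the smallest possible $\lambda,\mu$; transport the boxes of $\nu$ into $\lambda\cap\mu$ using coordinate-wise meets; analyse the local structure of the places where this transport fails (the \emph{floor plans}); and repair the transport using that analysis. For the reduction, write $p_i\in\N^2$ for the image of $b_i\in\N^3$ under the projection forgetting the third coordinate and $s_i\in\N$ for the third coordinate of $b_i$, so $\nu_i+b_i=\{p_i\}\times\{s_i,\dots,s_i+n_i-1\}$, and write $h_\lambda(q)$ for the height of $\lambda$ over $q\in\N^2$. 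Property~\ref{cor:combinatorial-mats-Young-diagrams::propertyideal} says precisely that the only boxes of $\lambda$ that are $\ge b_i$ are those of $\nu_i+b_i$; from this one reads off that the $p_i$ are pairwise distinct, that $h_\lambda(p_i)=s_i+n_i$ with $\nu_i+b_i$ the top $n_i$ boxes over $p_i$, and that $h_\lambda(q)\le s_i$ whenever $q>p_i$ — and symmetrically for $\mu$, with positions $p_i'$ and offsets $s_i'$. Since shrinking $\lambda$ (keeping it a $3$-dimensional Young diagram containing every $\nu_i+b_i$) only shrinks $|\lambda\cap\mu|$ while fixing $|\nu|$, I would replace $\lambda$ by the smallest such diagram and $\mu$ likewise; this stays a compatible tower and makes $\lambda=\bigcup_i C_i$, $\mu=\bigcup_i D_i$ unions of ``boxes'' $C_i=\{q\le p_i\}\times\{z<s_i+n_i\}$ and $D_i=\{q\le p_i'\}\times\{z<s_i'+n_i\}$.

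\smallskip\noindent\textbf{The meet transportation.} For each $i$ set $q_i^\star:=p_i\wedge p_i'$ (the coordinate-wise minimum in $\N^2$) and $\sigma_i:=\min(s_i,s_i')$. Because $q_i^\star\le p_i$ and $q_i^\star\le p_i'$, one has $h_\lambda(q_i^\star)\ge s_i+n_i$ and $h_\mu(q_i^\star)\ge s_i'+n_i$, so the $n_i$ boxes $\{q_i^\star\}\times\{\sigma_i,\dots,\sigma_i+n_i-1\}$ all lie in $\lambda\cap\mu$; call this the \emph{slot} of column $i$. If the slots of distinct columns are pairwise disjoint then $|\nu|=\sum_i n_i\le|\lambda\cap\mu|$ and we are done. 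The slots of $i\ne j$ can meet only when $q_i^\star=q_j^\star$ and the intervals $[\sigma_i,\sigma_i+n_i)$ and $[\sigma_j,\sigma_j+n_j)$ overlap; the remaining work addresses this.

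\smallskip\noindent\textbf{Floor plans.} Fix $q^\star\in\N^2$ and let $T=\{i:q_i^\star=q^\star\}$. Since $p_i\wedge p_i'=q^\star$, for $i\in T$ each of $p_i,p_i'$ lies at $q^\star$ or on the vertical ray or the horizontal ray out of $q^\star$; moreover (the $p_i$, and the $p_i'$, being distinct) at most one $i\in T$ has $p_i=q^\star$ and at most one has $p_i'=q^\star$, and every other $i\in T$ puts $p_i$ on one ray and then necessarily $p_i'$ on the complementary ray. This encodes $T$ as a two-dimensional object — a \emph{floor plan} — recording which columns sit on which ray and with which height intervals. The tower conditions heavily constrain the shape of this picture: columns of $T$ on a common ray have comparable positions, so in the relevant diagram their height intervals are disjoint and stacked; summing along a ray shows, for instance, that the columns of $T$ whose $\lambda$-position lies on the vertical ray out of $q^\star$ — which are exactly those whose $\mu$-position lies on the horizontal ray — have total size at most $\min(h_\lambda(q^\star),h_\mu(q^\star))$, and likewise that the (at most two) columns placed exactly at $q^\star$ have total size at most $\min(h_\lambda(q^\star),h_\mu(q^\star))$. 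I expect the technical heart of the argument to be a sharpening of these stacking estimates into a precise description of the shapes a floor plan can assume.

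\smallskip\noindent\textbf{Re-routing and conclusion.} Using the floor-plan structure I would reassign slots: the columns of $T$ sitting at $q^\star$ are stacked into the column of $\lambda\cap\mu$ over $q^\star$ (there is room, by the last estimate), while each column of $T$ on a ray is pushed outward along that ray; because such a column is paired between the vertical ray of $\lambda$ and the horizontal ray of $\mu$ (or the other way), the stacking estimates for \emph{both} diagrams guarantee that the displaced slot still lies in $\lambda\cap\mu$ and that, with care, the new slots are pairwise disjoint. The main obstacle I anticipate is making these local reassignments globally consistent: a slot pushed out of the floor plan at $q^\star$ must be checked not to collide with a slot of the floor plan at some other point, and I expect this is precisely what forces one to pin down the admissible shapes of floor plans exactly. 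Once disjointness of all the slots is secured globally, $\sum_i n_i=|\nu|\le|\lambda\cap\mu|$, which is Theorem~\ref{thm:main}.
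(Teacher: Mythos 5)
Your opening reduction is sound and matches the paper's first step: replacing $\lambda$ and $\mu$ by the order ideals generated by the translated columns (what the paper calls passing to a \emph{scaffolded} tower) preserves properties \ref{cor:combinatorial-mats-Young-diagrams::propertystart}--\ref{cor:combinatorial-mats-Young-diagrams::propertyend} and only shrinks $|\lambda\cap\mu|$. Your structural observations ($p_i$ distinct, $h_\lambda(p_i)=s_i+n_i$, $h_\lambda(q)\le s_i$ for $q>p_i$, hence the stacking of the intervals $[s_i,s_i+n_i)$ along a ray) are also correct. But from there your strategy diverges from the paper's, and the divergent part contains a genuine gap. You are attempting to build an explicit injection from the boxes of $\nu$ into $\lambda\cap\mu$: the meet transport $\nu_i\mapsto\{q_i^\star\}\times[\sigma_i,\sigma_i+n_i)$ does land each column inside $\lambda\cap\mu$, but it is not injective, and the entire difficulty of the theorem is concentrated in the collision case $q_i^\star=q_j^\star$. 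Your ``re-routing and conclusion'' paragraph does not resolve this: it asserts that columns can be ``pushed outward along the ray'' and that ``with care, the new slots are pairwise disjoint,'' and you yourself flag global consistency (collisions between slots re-routed from different meet points $q^\star$, and between re-routed slots and untouched slots) as an obstacle you ``anticipate.'' No rule for the reassignment is given and no disjointness is proved, so the proof is incomplete exactly where it needs to be complete. There is also a small structural error feeding into your stacking estimates: if $p_i=q^\star$ then $p_i'$ need only satisfy $p_i'\ge q^\star$ and need not lie on either ray, so the local picture at $q^\star$ is not quite the one you describe.

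For comparison, the paper never constructs an injection. It puts a partial order on compatible towers in which a smaller element of the order is ``at least as bad'' a counterexample (Definition \ref{def:Partial-Order-on-Examples}), reduces to minimal scaffolded towers arising as minimal realizations $\cT(P,Q,h)$ of a two-dimensional floor plan, and proves that minimality forces $B(P,h)\subseteq P$, $B(Q,h)\subseteq Q$, $P\cap Q=\varnothing$, and nesting of the supports. The punchline is a descent: in a minimal counterexample with $\supp(P)\subset\supp(Q)$, decrementing $h$ at the indices whose $Q$-position is a maximal cell of $\supp(Q)$ strictly decreases $|\lambda\cap\mu|-|\nu|$ and produces a strictly smaller counterexample, a contradiction. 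If you want to salvage your transportation approach you would need to supply the global re-routing rule and prove disjointness; otherwise the extremal argument is the way the difficulty you identified actually gets discharged.
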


The remainder of this paper is devoted to proving Theorem \ref{thm:main}.

\begin{remark}\label{rmk:2d-projection-of-3d-Young-diagram}
    It is often useful to think of a three-dimensional Young diagram $\lambda$ in terms of two-dimensional data. Let $\pi\colon\N^3\to\N^2$ be the projection onto the first two coordinates. Then specifying $\lambda$ is equivalent giving a function $H_\lambda\colon\N^2\to\N$ with the property that $H_\lambda(v)\geq H_\lambda(w)$ whenever $v\leq w$ in the poset partial order. Specifically, this equivalence is given by letting $H_\lambda(v)=|\lambda\cap\pi^{-1}(v)|$. We sometimes refer to $H_\lambda(v)$ as the \emph{height of $\lambda$ over $v$}.
\end{remark}




\section{Restricting the shape of counter-examples:~scaffolded towers}

We introduce the following partial order which will play a crucial role in our paper.

\begin{definition}
\label{def:Partial-Order-on-Examples}
    We define a partial order on compatible towers as follows: $(\lambda,\mu,\nu,b,c)\leq(\lambda',\mu',\nu',b',c')$ if the following hold:
    \begin{enumerate}
        \item $\lambda\subseteq\lambda'$ and $\mu \subseteq \mu'$,
        \item letting $\nu=(\nu_i\mid i\in I)$ and $\nu'=(\nu'_i\mid i\in J)$ there exists an injection $\iota\colon I \hookrightarrow J$ such that $\nu_i\subseteq\nu_{\iota(i)}'$,
        \item\label{def:partial-order::counterex} 
        $|\lambda\cap\mu|  - |\nu| \leq |\lambda'\cap\mu'| - |\nu'|$.
    \end{enumerate}
\end{definition}

\begin{remark}
\label{rmk:existence of minimality}
    By (\ref{def:partial-order::counterex}), if $(\lambda,\mu,\nu,b,c)\leq(\lambda',\mu',\nu',b',c')$ and $(\lambda',\mu',\nu',b',c')$ is a counter-example to the Gerstenhaber problem, then $(\lambda,\mu,\nu,b,c)$ is as well.
\end{remark}

Using Remark \ref{rmk:existence of minimality}, we will reduce the study of potential counter-examples to certain compatible towers, which we call \emph{scaffolded pairs}. We introduce this definition after first recalling the notion of order ideals.

\begin{definition}
    For $T \subseteq \N^3$, we define the \emph{order ideal} of $T$ to be
    \[
    \langle T\rangle := \bigcup_{(x,y,z) \in T}\{(a,b,c) \in \N^3 : (a,b,c) \leq (x,y,z)\}
    \]
    Where $\leq$ is the standard partial order on $\N^3$ given by $(a_1,a_2,a_3) \leq (x_1,x_2,x_3)$ if $a_i \leq x_i$ for all $i$.
\end{definition}
\begin{definition}
\label{def:fastened-shape}
    We say a tower $(\lambda,\nu,b)$ is \emph{scaffolded} if 
    \[
    \lambda = \left\langle \bigcup_{i=1}^r(\nu_i+b_i)\right\rangle
    \]
    where $\nu=(\nu_1,\dots,\nu_r)$.
    We say $(\lambda,\mu,\nu,b,c)$ is a compatible tower is \emph{scaffolded} (or a \emph{compatible scaffolded tower}) if both $(\lambda,\nu,b)$ and $(\mu,\nu,c)$ are scaffolded.
\end{definition}

\begin{example}
\label{ex:fastened-example}
    Let $e_i$ denote the $i$-th standard basis vector of $\Z^n$. Let $\nu_1=\nu_2=\nu_3 = \{(0,0,0)\}$ and $b_i=2e_i$. Then the tower $(\lambda,\nu,b)$ depicted below on the left is not scaffolded whereas the tower on the right is.
    \begin{center}
        \includegraphics[width = 0.4\textwidth]{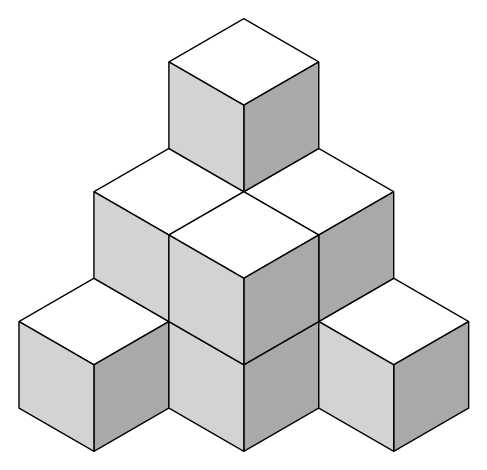}
        \includegraphics[width = 0.4\textwidth]{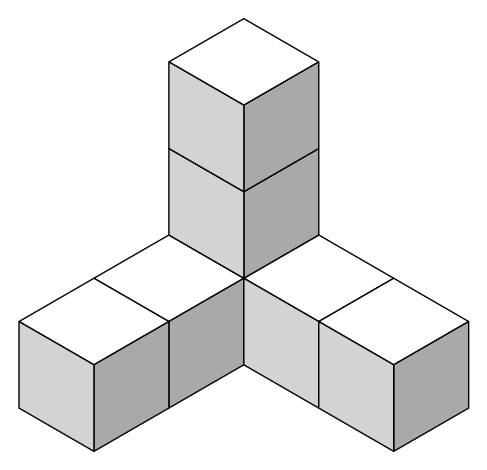}
    \end{center}
\end{example}


The following lemma shows that it suffices to consider compatible scaffolded towers.

\begin{lemma}
\label{lemma:scaffolded towers}
Let $(\lambda,\mu,\nu,b,c)$ be a compatible tower with $\nu=(\nu_1,\dots,\nu_r)$. Letting
    \[
        \lambda' = \left\langle \bigcup_{i=1}^r(\nu_i+b_i)\right\rangle\quad\textrm{and}\quad
        \mu' = \lambda' = \left\langle \bigcup_{i=1}^r(\nu_i+c_i)\right\rangle,
    \]
    we see $(\lambda',\mu',\nu',b,c)$ is scaffolded, and $(\lambda',\mu',\nu',b,c) \leq (\lambda,\mu,\nu,b,c)$.
\end{lemma}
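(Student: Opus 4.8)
The plan is to verify the three conditions in Definition \ref{def:Partial-Order-on-Examples} for the pair $(\lambda',\mu',\nu,b,c) \le (\lambda,\mu,\nu,b,c)$, after first checking that $(\lambda',\mu',\nu,b,c)$ is in fact a compatible scaffolded tower. (Note the statement writes $\nu'$; since the skew shapes $\nu_i$ and the gluing data $b,c$ are unchanged, we take $\nu'=\nu$ and $\iota = \id$ in condition (2), so that condition is immediate.) The content is therefore: (i) $\lambda'$ and $\mu'$ are genuine $3$-dimensional Young diagrams and $(\lambda',\nu,b)$, $(\mu',\nu,c)$ are towers; (ii) $\lambda'\subseteq\lambda$ and $\mu'\subseteq\mu$; and (iii) $|\lambda'\cap\mu'|-|\nu|\le|\lambda\cap\mu|-|\nu|$, i.e.\ $|\lambda'\cap\mu'|\le|\lambda\cap\mu|$.

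For (i): that $\lambda' = \langle\bigcup_i(\nu_i+b_i)\rangle$ is a $3$-dimensional Young diagram is immediate from the definition of order ideal (it is downward closed by construction, and finite since each $\nu_i+b_i$ is finite). For $(\lambda',\nu,b)$ to be a tower we must check properties \ref{cor:combinatorial-mats-Young-diagrams::propertystart} and \ref{cor:combinatorial-mats-Young-diagrams::propertyideal} of Corollary \ref{cor:combinatorial-mats-Young-diagrams}. Property \ref{cor:combinatorial-mats-Young-diagrams::propertystart}, that the $\nu_i+b_i$ are disjoint and contained in $\lambda'$: disjointness is inherited from the original tower $(\lambda,\nu,b)$, and $\nu_i+b_i\subseteq\lambda'$ by the very definition of $\lambda'$ as the order ideal generated by the union. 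Property \ref{cor:combinatorial-mats-Young-diagrams::propertyideal}: suppose $v_i\in\nu_i$, $w\in\N^3$, and $v_i+b_i+w\in\lambda'$; we must show $v_i+w\in\nu_i$. Since $\lambda'\subseteq\lambda$ (which we prove next, and which does not use this property), $v_i+b_i+w\in\lambda$, so property \ref{cor:combinatorial-mats-Young-diagrams::propertyideal} for the original tower $(\lambda,\nu,b)$ gives $v_i+w\in\nu_i$. The argument for $(\mu',\nu,c)$ is identical, using $\mu'\subseteq\mu$. Scaffoldedness of the resulting compatible tower is then immediate from the defining equalities, using the slight subtlety flagged in the statement that we need $\langle\bigcup_i(\nu_i+b_i)\rangle=\langle\bigcup_i(\nu_i+c_i)\rangle$ — but this holds because each $\nu_i$ is a $1\times1\times n_i$ shape $\{(0,0,z):0\le z<n_i\}$, so both sides equal $\langle\{(b_i'+(0,0,n_i-1)):i\}\rangle$ type data depending only on the shifted top cells and on the $\nu_i$, and in fact since $\nu_i+c_i$ is just a translate of $\nu_i+b_i$ the order ideals they generate are genuinely equal as written (here I would simply define $\lambda'$ and $\mu'$ separately and not claim they are literally equal unless needed; re-reading the Lemma, $\mu'$ is defined by the second formula, so I only need each to be a Young diagram).

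For (ii): I claim $\langle\bigcup_i(\nu_i+b_i)\rangle\subseteq\lambda$. Since $\lambda$ is a Young diagram (hence downward closed), and $\bigcup_i(\nu_i+b_i)\subseteq\lambda$ by property \ref{cor:combinatorial-mats-Young-diagrams::propertystart} for the original tower, the order ideal generated by this union is also contained in $\lambda$. Thus $\lambda'\subseteq\lambda$, and symmetrically $\mu'\subseteq\mu$. For (iii): from $\lambda'\subseteq\lambda$ and $\mu'\subseteq\mu$ we get $\lambda'\cap\mu'\subseteq\lambda\cap\mu$, hence $|\lambda'\cap\mu'|\le|\lambda\cap\mu|$, which is exactly condition (3) after subtracting $|\nu|$ from both sides (the $|\nu|$ terms agree since $\nu'=\nu$).

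The only genuinely delicate point — and the one I would be most careful about — is the logical order in part (i): property \ref{cor:combinatorial-mats-Young-diagrams::propertyideal} for the new tower is deduced from the inclusion $\lambda'\subseteq\lambda$ together with property \ref{cor:combinatorial-mats-Young-diagrams::propertyideal} for the old tower, so one must establish $\lambda'\subseteq\lambda$ first (which, as noted, uses only that $\lambda$ is downward closed and contains $\bigcup_i(\nu_i+b_i)$, not the order-ideal property). Everything else is bookkeeping with downward-closed sets.
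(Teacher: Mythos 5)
Your proof is correct and follows essentially the same route as the paper's: scaffoldedness holds by construction, and the inclusions $\lambda'\subseteq\lambda$, $\mu'\subseteq\mu$ follow because $\lambda$ and $\mu$ are downward closed and contain $\bigcup_i(\nu_i+b_i)$ and $\bigcup_i(\nu_i+c_i)$ respectively, which immediately gives condition (3) of Definition \ref{def:Partial-Order-on-Examples}. You supply more detail than the paper does (in particular, verifying that $(\lambda',\nu,b)$ is still a tower by inheriting property \ref{cor:combinatorial-mats-Young-diagrams::propertyideal} from $\lambda'\subseteq\lambda$, and correctly reading the statement's ``$\mu'=\lambda'=\cdots$'' as a typo), but the substance is identical.
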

\begin{proof}
We see $(\lambda',\mu',\nu',b,c)$ is a scaffolded by definition. Since $\lambda$ is a three-dimensional Young diagram, if $p\in\lambda$ and $0\leq q\leq p$, then $q\in\lambda$. Since $\nu_i+b_i\subset\lambda$, it follows that the order ideal generated by $\nu_i+b_i$ is contained in $\lambda$, and similarly for $\mu$. Thus, $(\lambda',\mu',\nu',b,c) \leq (\lambda,\mu,\nu,b,c)$.
\end{proof}

\section{Reducing to two-dimensional data:~floor plans}
\label{Sec:Two-dimensional-Picture}

Due to Remark \ref{rmk:existence of minimality} and Lemma \ref{lemma:scaffolded towers}, we have reduced Theorem \ref{thm:main} to the case of compatible scaffolded towers. In this section, we further reduce to two-dimensional data. For this, we introduce the following definition.

\begin{definition}\label{def:floor-plan}
    A \emph{floor plan} is a pair $(P,h)$ of sequences $P=(p_1,\dots,p_r)$, $h=(h_1,\dots,h_r)$ with $p_i\in\N^2$ and $h_i\in\Z^+$. A \emph{compatible floor plan} is a triple $(P,Q,h)$ where $(P,h)$ and $(Q,h)$ are floor plans.
\end{definition}

We have a function
\[
\cF\colon\{\textrm{scaffolded\ towers}\}\to \{\textrm{floor\ plans}\}
\]
defined as follows. If $(\lambda,\nu,b)$ is a scaffolded tower with $\nu=(\nu_1,\dots,\nu_r)$ and $\pi\colon\N^3\to\N^2$ is given by $\pi(x,y,z)=(x,y)$, then $\cF(\lambda,\nu,b)=((p_i)_i,(h_i)_i)$, where $h_i=|\nu_i|$ and $p_i=\pi(b_i)$. Similarly, we have an induced function 
\[
\cF\colon\{\textrm{compatible\ scaffolded\ towers}\}\to \{\textrm{compatible\ floor\ plans}\}
\]
which we also denote by $\cF$. The notation $(P,h)$ is chosen since one should think of $P$ as the positions of the $\nu_i$ and $h$ as the associated heights.

Our next goal is to construct suitable sections of the maps denoted by $\cF$. For this, we need some preliminary definitions. A \emph{North-East path} 
is a sequence $\gamma:=(q_1,q_2,\dots,q_m)$ with $q_i\in\N^2$ and $q_{i+1}-q_i\in\{(1,0),(0,1)\}$. Note that we do not require $q_1=(0,0)$. We say $\gamma$ \emph{originates at} $q_1$.

\begin{definition}\label{def:score-path}
    Given a floor plan $(P,h)$ with $P=(p_1,\dots,p_r)$ and $h=(h_1,\dots,h_r)$, the \emph{score} of a North-East path $\gamma=(q_1,\dots,q_m)$ is
    \[
    \score_{(P,h)}(\gamma):=\sum_{p_i\in\{q_1,\dots,q_m\}}h_i.
    \]
    The \emph{max score} of a lattice point $q\in\N^2$ is
    \[
    \max\score_{(P,h)}(q):=\max\{\score_{(P,h)}(\gamma)\mid \gamma\textrm{\ originates\ at\ } q\}.
    \]
    Any North-East path $\gamma$ originating at $q$ for which $\max\score_{(P,h)}(q)=\score(\gamma)$ is referred to as a \emph{winning path} for $q$. When $(P,h)$ is understood from context, we suppress it in the notation for $\score$ and $\max\score$.
\end{definition}

\begin{definition}\label{def:minimal-realization-of-floor-plan}
    Given a floor plan $(P,h)$, its \emph{minimal realization} $\cT(P,h)$ is the tower $(\lambda,\nu,b)$ defined as follows. Let $\nu_i$ be a $1\times 1\times h_i$ shape and let $\lambda$ be the three-dimensional Young diagram whose corresponding function $H_\lambda\colon\N^2\to\N$ is given by $H_\lambda(q):=\max\score_{(P,h)}(q)$, see Remark \ref{rmk:2d-projection-of-3d-Young-diagram}. We let $b_i=H_\lambda(p_i)-h_i$, where $P=(p_i)_i$.
    
    Similarly, given a compatible floor plan $(P,Q,h)$, its \emph{minimal realization} $\cT(P,Q,h)$ is $(\lambda,\mu,\nu,b,c)$ where $(\lambda,\nu,b)=\cT(P,h)$ and $(\mu,\nu,c)=\cT(Q,h)$.
\end{definition}

\begin{example}\label{ex:minimal-realization}
Consider the floor plan $(P,h)$ where the set $P$ consists of the labeled boxes and the number in each box denotes the corresponding value of $h$.
\begin{center}
\begin{ytableau}
3 \\
\none & 2  \\
\none & \none & \none & 5 \\
2 & \none & \none &  3\\
\none & \none & \none & 1& 1 & 4
\end{ytableau}
\end{center}
Let $(\lambda,\nu,b)=\cT(P,h)$. Then $\lambda$ is given by the function $H_\lambda$ whose values are depicted below.
\begin{center}
\begin{ytableau}
3 \\
3 & 2  \\
5 & 5 & 5 & 5 \\
10 & 8 & 8 & 8\\
10 & 9 & 9 & 9 & 5 & 4
\end{ytableau}
\end{center}
\end{example}

\begin{proposition}\label{prop:minimal-tower-section}
    The minimal realizations $\cT$ yield sections of the two maps denoted by $\cF$. Furthermore, if $(\lambda,\mu,\nu,b,c)$ is scaffolded, then
    \[
    \cT(\cF(\lambda,\mu,\nu,b,c))\leq (\lambda,\mu,\nu,b,c)
    \]
    with respect to the partial order in Definition \ref{def:Partial-Order-on-Examples}.
\end{proposition}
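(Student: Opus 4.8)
The plan is to verify the two assertions separately: first that $\cT$ is a section of $\cF$, then that $\cT(\cF(\lambda,\mu,\nu,b,c))\leq(\lambda,\mu,\nu,b,c)$ for scaffolded towers. For the section claim, let $(P,h)$ be a floor plan and set $(\lambda,\nu,b)=\cT(P,h)$. By construction $|\nu_i|=h_i$, so the height coordinate of $\cF(\lambda,\nu,b)$ is again $h$. For the position coordinate I must check that $\pi(b_i)=p_i$; since $b_i$ is defined to have first two coordinates $p_i$ (this is implicit in ``$b_i=H_\lambda(p_i)-h_i$'' being interpreted as placing the $1\times1\times h_i$ shape vertically over $p_i$), this is immediate once one unwinds Definition~\ref{def:minimal-realization-of-floor-plan}. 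The one genuine thing to verify is that $\cT(P,h)$ really is a tower, i.e.\ that $\lambda$ as defined is a three-dimensional Young diagram and that properties \ref{cor:combinatorial-mats-Young-diagrams::propertystart} and \ref{cor:combinatorial-mats-Young-diagrams::propertyideal} of Corollary~\ref{cor:combinatorial-mats-Young-diagrams} hold. That $\lambda$ is a Young diagram amounts to showing $H_\lambda(q)=\max\score(q)$ is weakly decreasing in the poset order on $\N^2$: if $q\leq q'$, then any North-East path originating at $q'$ can be prepended with a North-East path from $q$ to $q'$, giving $\max\score(q)\geq\max\score(q')$. Property \ref{cor:combinatorial-mats-Young-diagrams::propertystart} (the shapes $\nu_i+b_i$ are disjoint and contained in $\lambda$) follows because the vertical column over $p_i$ has height $H_\lambda(p_i)=\max\score(p_i)\geq h_i$, the inequality coming from the length-one path $(p_i)$ itself; disjointness is clear since distinct $p_i$ give disjoint columns (if some $p_i=p_j$ one must take a mild convention, but this is a bookkeeping point). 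Property \ref{cor:combinatorial-mats-Young-diagrams::propertyideal} holds vacuously-ish: since $\nu_i=\{(0,0,z)\mid z<h_i\}$ and $b_i$ places it at the \emph{top} of the column over $p_i$, for $w\in\N^3$ with $v_i+b_i+w\in\lambda$ one needs $v_i+w\in\nu_i$; the content is that $w$ cannot have a positive first or second coordinate without leaving $\lambda$ — but it can, so the real claim is that the column over $p_i$ in $\lambda$ has height exactly $H_\lambda(p_i)$ and the shape sits flush against the top, which is exactly the definition $b_i=H_\lambda(p_i)-h_i$; one then checks that moving up by $w$ in the $z$-direction leaves $\lambda$ precisely when $v_i+w\notin\nu_i$. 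I expect this verification to be the most delicate part, since it is where the precise placement convention in Definition~\ref{def:minimal-realization-of-floor-plan} interacts with the Young-diagram axioms.

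For the inequality $\cT(\cF(\lambda,\mu,\nu,b,c))\leq(\lambda,\mu,\nu,b,c)$, write $(P,Q,h)=\cF(\lambda,\mu,\nu,b,c)$ and $(\lambda_0,\mu_0,\nu_0,b_0,c_0)=\cT(P,Q,h)$. Since $\cT\circ\cF$ fixes the floor plan data, $\nu_0=\nu$ (up to reindexing), so the injection $\iota$ in Definition~\ref{def:Partial-Order-on-Examples}(2) is the identity and condition (2) holds with equality. The heart of the argument is condition (1): $\lambda_0\subseteq\lambda$. Equivalently, by Remark~\ref{rmk:2d-projection-of-3d-Young-diagram}, I must show $H_{\lambda_0}(q)\leq H_\lambda(q)$ for all $q\in\N^2$, i.e.\ $\max\score_{(P,h)}(q)\leq H_\lambda(q)$. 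Take a winning North-East path $\gamma=(q_1,\dots,q_m)$ for $q$ in the floor plan, so $\max\score(q)=\sum_{p_i\in\{q_1,\dots,q_m\}}h_i$. Here is where scaffoldedness of $(\lambda,\nu,b)$ enters: because $\lambda=\langle\bigcup_i(\nu_i+b_i)\rangle$, the height $H_\lambda$ is controlled by the positions $p_i=\pi(b_i)$ and heights $h_i$. The key geometric observation is that if $p_i$ lies weakly to the southwest of $q$ — which is automatic if $p_i\in\{q_1,\dots,q_m\}$ since the path is North-East and $q_1=q$ — hmm, actually one wants the opposite: a North-East path from $q$ visits points $\geq q$, and $H_\lambda$ is decreasing, so I should instead argue that the column over $q$ in a scaffolded $\lambda$ has height at least the sum of $h_i$ over all $i$ with $p_i$ in the order ideal $\langle q\rangle$ that also... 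Let me restate: the correct claim is that for a scaffolded tower, $H_\lambda(q)\geq \sum\{h_i : p_i\geq q \text{ in some North-East path sense}\}$, and in fact $H_\lambda(q)=\max\score(q)$ would give equality — but $\cT$ produces the \emph{minimal} such $\lambda$, so we only need $\geq$. The cleanest route: for any North-East path $\gamma$ originating at $q$, the points $b_i$ with $p_i$ on $\gamma$ all lie in $\lambda$, and I claim their vertical extents stack up over the column $\pi^{-1}(q)\cap\lambda$ — more precisely, since $\lambda$ is an order ideal generated by the $\nu_i+b_i$, and the $\nu_i$ are vertical segments, one shows by a telescoping/greedy argument along $\gamma$ that $H_\lambda(q)\geq\score(\gamma)$. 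Taking $\gamma$ to be a winning path gives $H_\lambda(q)\geq\max\score(q)=H_{\lambda_0}(q)$.

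The main obstacle I anticipate is precisely this last step — proving $H_\lambda(q)\geq\score_{(P,h)}(\gamma)$ for a North-East path $\gamma$ originating at $q$, using only that $\lambda$ is the order ideal generated by vertical shapes $\nu_i+b_i$ with $\pi(b_i)=p_i$. The subtlety is that the contributions of different $\nu_i$ along the path could in principle overlap in the $z$-direction rather than stack; one must use that $\gamma$ being North-East forces the relevant columns to nest, so that the order-ideal closure genuinely accumulates their heights. I would handle this by induction on the length of $\gamma$: for the one-point path the bound is trivial; for the inductive step, peel off the last vertex $q_m$, use that $q_{m-1}\leq q_m$ fails in the poset but $q_{m-1}$ is a NE-predecessor, and carefully track how adding $q_m$ can only increase the accumulated height by at most $\sum_{p_i=q_m}h_i$ while the order-ideal structure guarantees it increases by at least that much when $q_m\in P$. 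Once condition (1) is in hand, condition (3), $|\lambda_0\cap\mu_0|-|\nu_0|\leq|\lambda\cap\mu|-|\nu|$, follows from $\lambda_0\subseteq\lambda$, $\mu_0\subseteq\mu$ (hence $\lambda_0\cap\mu_0\subseteq\lambda\cap\mu$) and $\nu_0=\nu$; indeed $|\lambda_0\cap\mu_0|\leq|\lambda\cap\mu|$ directly. This completes the plan.
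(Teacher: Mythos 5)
Your overall strategy matches the paper's, but two genuine gaps remain. First, for $\cT$ to be a section of $\cF$ you must check that $\cT(P,h)$ lies in the domain of $\cF$, which consists of \emph{scaffolded} towers; you verify the tower axioms but never verify scaffoldedness, i.e.\ that $\lambda=\langle\bigcup_i(\nu_i+b_i)\rangle$. This is where the paper spends the first half of its proof: given $q$ with $m=\max\score(q)>0$, take a winning path $\gamma$ from $q$, let $q_\ell$ be the first point of $P$ on $\gamma$; then the tail of $\gamma$ is a winning path for $q_\ell$, so $\max\score(q_\ell)=m$ and $q+(0,0,m)$ lies below $q_\ell+(0,0,m)$, which is the top box of $\nu_i+b_i$ for the relevant $i$. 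Your proposal contains no argument of this kind.

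Second, the step you yourself flag as the ``main obstacle'' --- proving $H_\lambda(q)\geq\score_{(P,h)}(\gamma)$ for every North-East path $\gamma$ from $q$ --- is left as a vague inductive sketch, and the mechanism you propose for it is the wrong one. You repeatedly appeal to ``the order-ideal structure'' ($\lambda$ being generated by the $\nu_i+b_i$) to force heights to accumulate, but that structure only bounds $\lambda$ from \emph{above} (it is the minimal Young diagram containing the shapes) and cannot by itself yield the lower bound on $H_\lambda$. The ingredient that actually makes the telescoping work is property \ref{cor:combinatorial-mats-Young-diagrams::propertyideal} of Corollary \ref{cor:combinatorial-mats-Young-diagrams} (part of the tower data, available since a scaffolded tower is a tower): each $\nu_j+b_j$ occupies the top $h_j$ boxes of its column and nothing of $\lambda$ lies weakly above any of its boxes except within the shape itself. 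Applied with $w=(q_{i_j+1}-q_{i_j},\,z-z(b_j))$ it shows no box of $\lambda$ over $q_{i_j+1}$ reaches height $z(b_j)$ or above, giving $H_\lambda(q_{i_j})\geq h_j+H_\lambda(q_{i_j+1})$, and summing along $\gamma$ finishes the argument. Without invoking property \ref{cor:combinatorial-mats-Young-diagrams::propertyideal} your inductive step does not close, so as written the proposal does not constitute a proof. (Your remaining points --- $\cF\circ\cT$ preserving $(P,h)$, monotonicity of $\max\score$, and deducing condition (3) of Definition \ref{def:Partial-Order-on-Examples} from $\lambda_0\subseteq\lambda$, $\mu_0\subseteq\mu$, $\nu_0=\nu$ --- are correct and agree with the paper.)
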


\begin{remark}\label{rmk:stronger-minimal-tower-section}
    In fact, the proof of Proposition \ref{prop:minimal-tower-section} shows the stronger statement that if $(\lambda,\nu,b)$ is a scaffolded tower, then $(\lambda',\nu,b')=\cT(\cF(\lambda,\nu,b))$ satisfies $\lambda'\subseteq\lambda$.
\end{remark}

\begin{proof}
Let $(P,h)$ be a floor plan with $P=(p_i)_i$. We first show that $(\lambda,\nu,b):=\cT(P,h)$ is scaffolded. Let $q\in\N^2$ with $m:=\max\score_{(P,h)}(q)>0$.
We must show $q+(0,0,m)$ is in the order ideal generated by the $(0,0,h_i-1) + b_i$.

For this, let $\gamma=(q_1,q_2,\dots,q_s)$ be a winning path originating at $q_1:=q$. Since $m>0$, there exists $q_\ell\in P$. Without loss of generality, $q_\ell=p_1$ and $q_i\notin P$ for $i<\ell$. Then $(q_\ell,q_{\ell+1},\dots,q_s)$ is a winning path originating at $q_\ell$; indeed, if $\gamma'$ is a path originating at $q_\ell$ with a strictly larger score, then $(q_1,\dots,q_{\ell-1})$ concatenated with $\gamma'$ would yield a strictly larger score than $\gamma$. Thus, $q+(0,0,m)$ is in the order ideal generated by $q_\ell+(0,0,m)$, and this latter element is, by construction, in $(0,0,h_1-1) + b_1$. We have therefore shown $\cT(P,h)$ is scaffolded.

Next, let $(\lambda,\nu,b)$ be a scaffolded tower, $(P,h):=\cF(\lambda,\nu,b)$ the associated floor plan, and $(\lambda',\nu,b')=\cT(P,h)$. We claim that if $q\in\N^2$, we have $H_\lambda(q)\geq\score_{(P,h)}(\gamma)$ for any North-East path $\gamma$ originating at $q$. Let $\gamma=(q_1,\dots,q_s)$ and assume without loss of generality that $q_{i_j}=p_j$ for $i_1<\dots<i_\ell$, and for $t\neq i_j$ we have $q_t\notin P$. We know $\nu_j+b_j\subset\lambda$ and by property \ref{cor:combinatorial-mats-Young-diagrams::propertyideal} of Corollary \ref{cor:combinatorial-mats-Young-diagrams}, $w+b_j\notin\lambda$ for all $w>0$. Therefore, 
\[
H_\lambda(q_{i_j})\geq |\nu_j|+H_\lambda(q_{i_j+1})=h_j+H_\lambda(q_{i_j+1}).
\]
Thus, 
\[
H_\lambda(q)\geq\sum_j h_j=\score_{(P,h)}(\gamma).
\]
 As a result, $\lambda'\subseteq\lambda$. Note that since $\nu$ remains unchanged under the operation $\cT\circ\cF$, the proposition follows.
\end{proof}

\section{Constraints on the border of a floor plan}
\label{Subsec:2D-min}

In this section, we further reduce Theorem \ref{thm:main} to the study of floor plans whose borders are highly constrained. Throughout this section, we use the following notation. Let $e_1,e_2,e_3$ be the standard basis vectors of $\Z^3$. For any $p\in\N^3$, we let $x(p)$, $y(p)$, and $z(p)$ denote the $x$, $y$, and $z$ coordinates of $p$. If $(P,h)$ is a floor plan with $P=(p_i)_i$, we will sometimes write $p\in P$ to mean $p=p_i$ for some $i$.

\begin{definition}\label{def:minimal-floor-plan}
Let $(P,h)$ and $(P',h)$ be floor plans\footnote{Note that the second coordinates of these floor plans are the same.} and let $(\lambda,\nu,b)=\cT(P,h)$ and $(\lambda',\nu,b')=\cT(P',h)$. We write
\[
(P',h)\leq(P,h)\quad\textrm{if}\quad\lambda'\subset\lambda.\footnote{This is equivalent to $\cT(P',h)\leq\cT(P,h)$ with respect to the partial ordering in Definition \ref{def:Partial-Order-on-Examples}.}
\]
Similarly, given compatible floor plans $(P,Q,h)$ and $(P',Q',h)$, we write
\[
(P,Q,h)\leq(P',Q',h)\quad\textrm{if}\quad\cT(P,Q,h)\leq\cT(P',Q',h)
\]
with respect to the partial ordering in Definition \ref{def:Partial-Order-on-Examples}. We say $(P,h)$, respectively $(P,Q,h)$, is \emph{minimal} if it is minimal with respect to these partial orders.
\end{definition}

\begin{definition}\label{def:border}
    Let $(P,h)$ be a floor plan. We define the \emph{support} of $(P,h)$ to be
    \[
    \supp(P,h):=\{q\in\N^2\mid \max\score_{(P,h)}(q)>0\}.
    \]
    The \emph{border} $B(P,h)$ is then defined as all $q\in\supp(P,h)$ such that $q+e_1$ or $q+e_2$ is not in $\supp(P,h)$.
\end{definition}
\begin{remark}
    Note that $\supp(P,h)$ is the projection of $\cT(P,h)$ onto the $xy$-plane.
\end{remark}


Our first goal is to prove:

\begin{proposition}\label{prop:complete-max-trails-exist}
    Let $(P,h)$ be a minimal floor plan. Then
    \[
    B(P,h)\subseteq P.
    \]
    In particular, if $(P,Q,h)$ is a minimal compatible floor plan, then $B(P,h)\subseteq P$ and $B(Q,h)\subseteq Q$.
\end{proposition}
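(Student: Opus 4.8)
The plan is to argue by contradiction. Suppose $(P,h)$ is minimal but there is some $q\in B(P,h)$ with $q\notin P$; I will relocate a single position of $P$ to obtain a strictly smaller floor plan, contradicting minimality. Since $q\in B(P,h)$, at least one of $q+e_1,q+e_2$ is not in $\supp(P,h)$; by the symmetry swapping $e_1$ and $e_2$ I may assume $q+e_1\notin\supp(P,h)$. Unwinding Definition \ref{def:border}, $q+e_1\notin\supp(P,h)$ means no position $p_i$ of $P$ satisfies $p_i\geq q+e_1$. On the other hand $q\in\supp(P,h)$ forces some $p_i\geq q$. A one-line argument then shows that every position $p_i$ with $p_i\geq q$ in fact has $x(p_i)=x(q)$ (it cannot have $x(p_i)\geq x(q)+1$, else $p_i\geq q+e_1$), and hence, since $q\notin P$, has $y(p_i)\geq y(q)+1$. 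This is the structural consequence of being on the border that drives everything; in particular no position of $P$ has $x$-coordinate $>x(q)$ together with $y$-coordinate $\geq y(q)$. Fix any such position and call it $p_j$.

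The key move is to form $(P',h)$ from $(P,h)$ by replacing $p_j$ with $p_j-e_2$ (which lies in $\N^2$ since $y(p_j)\geq y(q)+1$), and to show $\lambda':=\cT(P',h)$ satisfies $\lambda'\subsetneq\lambda:=\cT(P,h)$. This reduces to two claims about max-scores. First, $\max\score_{(P',h)}(r)\leq\max\score_{(P,h)}(r)$ for all $r\in\N^2$: given a winning path $\gamma'$ for $r$ in $(P',h)$, I produce a North-East path in $(P,h)$ of score at least $\score_{(P',h)}(\gamma')$. If $\gamma'$ avoids $p_j-e_2$, then $\gamma'$ itself works, losing nothing relative to $(P,h)$ and possibly gaining $h_j$ if it passes through $p_j$. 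If $\gamma'$ passes through $p_j-e_2$ and then steps North to $p_j$, then $\gamma'$ itself again works with equality, the weight $h_j$ merely transferring from $p_j-e_2$ to $p_j$. In the remaining case $\gamma'$ reaches $p_j-e_2$ and then stops or turns East; here the structural fact above shows that the tail of $\gamma'$ strictly beyond $p_j-e_2$ lies in the region $\{x>x(q),\,y\geq y(q)\}$ and so collects nothing, whence replacing that tail by a single North step to $p_j$ gives a path in $(P,h)$ of the same score. Second, the inequality is strict at $r=p_j$: a North-East path originating at $p_j$ can never reach $p_j-e_2$, so any such path scores exactly $h_j$ more in $(P,h)$ than in $(P',h)$, giving $\max\score_{(P,h)}(p_j)=\max\score_{(P',h)}(p_j)+h_j$. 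Thus $\lambda'\subseteq\lambda$ and the height of $\lambda$ over $p_j$ strictly exceeds that of $\lambda'$, so $\lambda'\subsetneq\lambda$; this contradicts minimality of $(P,h)$ and proves $B(P,h)\subseteq P$.

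The compatible case then follows formally. If $q\in B(P,h)\setminus P$, perform the same relocation on $P$ while leaving $Q$ and $h$ untouched; this replaces $\cT(P,Q,h)=(\lambda,\mu,\nu,b,c)$ by $(\lambda',\mu,\nu,b',c)$ with $\lambda'\subsetneq\lambda$. Since $\lambda'\cap\mu\subseteq\lambda\cap\mu$, all three conditions of the partial order of Definition \ref{def:Partial-Order-on-Examples} hold, so $\cT(P',Q,h)\leq\cT(P,Q,h)$ while $\cT(P,Q,h)\not\leq\cT(P',Q,h)$ (as $\lambda\not\subseteq\lambda'$), contradicting minimality of $(P,Q,h)$. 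The identical argument applied to $Q$ gives $B(Q,h)\subseteq Q$.

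The main obstacle is the first claim, that the one-step relocation $p_j\mapsto p_j-e_2$ never increases any max-score: one must track precisely how the weight of a winning North-East path changes, and the case that requires care is when the path passes through $p_j-e_2$ and continues. The crucial input making the re-routing possible is exactly the border hypothesis, which forces every position "visible from $q$" to be vertically aligned with $q$ and hence makes the relevant tails weightless.
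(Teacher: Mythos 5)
Your proof is correct and is essentially the paper's argument in mirror image: the paper assumes without loss of generality that $q+e_2\notin\supp(P,h)$ and slides the nearest $P$-position on the row of $q$ one step west, invoking Lemma \ref{l:shrinking-path1}, whereas you assume $q+e_1\notin\supp(P,h)$ and slide a $P$-position in the column of $q$ one step south, re-deriving the score inequality of that lemma inline via the same path re-routing. The underlying mechanism --- no position of $P$ dominates the missing neighbour of $q$, so the displaced segment of any path is weightless --- is identical, so this counts as the same approach.
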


We prove this proposition after a preliminary result.

\begin{lemma}\label{l:shrinking-path1}
    Let $(P,h)$ be a floor plan. Suppose there exists $i$ such that
    \begin{enumerate}
        \item $x(p_i)>0$ and
        \item for all $j$ with $x(p_j)=x(p_i)-1$, we have $y(p_j)<y(p_i)$.
    \end{enumerate}
    Then letting $p'_i=p_i-e_1$ and $p'_k=p_k$ for all $k\neq i$, we have $(P',h)<(P,h)$ where $P'=(p'_j)_j$.
\end{lemma}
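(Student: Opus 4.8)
The goal is to show that pushing the point $p_i$ one step west (decreasing its $x$-coordinate by $1$) produces a floor plan $(P',h)$ whose minimal realization has a strictly smaller $3$-dimensional Young diagram. By Definition \ref{def:minimal-floor-plan}, this amounts to showing $\lambda' \subsetneq \lambda$ where $(\lambda,\nu,b) = \cT(P,h)$ and $(\lambda',\nu,b') = \cT(P',h)$; equivalently, in terms of the height functions of Remark \ref{rmk:2d-projection-of-3d-Young-diagram}, that $H_{\lambda'}(q) \leq H_\lambda(q)$ for all $q \in \N^2$, with strict inequality somewhere. Since $H_\lambda(q) = \max\score_{(P,h)}(q)$ and $H_{\lambda'}(q) = \max\score_{(P',h)}(q)$, everything reduces to comparing max scores of North-East paths before and after the move.

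First I would establish the easy direction, $\max\score_{(P',h)}(q) \leq \max\score_{(P,h)}(q)$ for all $q$. Take a winning path $\gamma' = (q_1,\dots,q_m)$ for $q$ with respect to $(P',h)$. If $\gamma'$ does not pass through $p_i' = p_i - e_1$, it has the same score with respect to $(P,h)$ (the hypothesis (2) — that no $p_j$ with $x(p_j) = x(p_i)-1$ has $y(p_j) \geq y(p_i)$ — guarantees $P' $ and $P$ agree off the single moved point, so the only discrepancy a path can see is at $p_i$ versus $p_i'$), and we are done. If $\gamma'$ does pass through $p_i'$, I would modify it: let $q_t = p_i'$ be the point where it enters, and consider the path that follows $\gamma'$ up to $q_{t-1}$, then steps east to $p_i = p_i' + e_1$, and then proceeds North-East to cover, as far as possible, all the points of $P$ that lay on the tail $(q_t, \dots, q_m)$ shifted appropriately. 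The key point is that hypothesis (2) ensures the column $x = x(p_i) - 1$ contains no point of $P$ at height $\geq y(p_i)$, so the portion of $\gamma'$ strictly above $q_t$ in that column scores nothing; rerouting through $p_i$ loses only the value $h_i$ at $p_i'$ but immediately regains it at $p_i$, and the rest of the route can be arranged to pick up at least as much. Making this rerouting argument precise — showing the new path is a legitimate North-East path and accounts for every $p_j$ the old one did — is the part requiring the most care.

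For the strict inequality, I would exhibit a single lattice point $q$ where the height drops. The natural candidate is $q = p_i' = p_i - e_1$ itself, or a point just to its west/south. Before the move, $p_i \in P$ contributes $h_i$ to the max score of $p_i' $ since a winning path from $p_i'$ can step east into $p_i$; after the move, the point formerly at $p_i$ is gone, and by hypothesis (2) there is nothing in column $x(p_i)-1$ at height $\geq y(p_i)$ to replace it — so I expect $\max\score_{(P',h)}(p_i')$ to be strictly smaller, but one must rule out the possibility that some other reroute recovers the lost $h_i$. Here hypothesis (1), $x(p_i) > 0$, guarantees $p_i'$ is a legitimate point of $\N^2$ so this comparison makes sense. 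I would argue that any North-East path from $p_i'$ either enters column $x(p_i)$ at some height $\leq y(p_i)$ — in which case under $(P,h)$ the analogous path (detouring through $p_i$) scores at least as much, strictly more if it entered below $p_i$ and then could still reach $p_i$ — or never enters that column, in which case hypothesis (2) says it meets no point of $P$ that it would not also meet under $(P',h)$. Pinning down this case analysis to force a strict drop at $p_i'$ is the crux; once it is done, $\lambda' \subsetneq \lambda$ follows, hence $(P',h) < (P,h)$, and the final sentence about compatible floor plans is immediate from Definition \ref{def:minimal-floor-plan}.
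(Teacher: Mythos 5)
Your treatment of the inequality $\max\score_{(P',h)}(q)\leq\max\score_{(P,h)}(q)$ is essentially the paper's argument: by hypothesis (2), the segment of a path lying in the column $x=x(p_i)-1$ at or above $p_i'$ meets no point of $P$, and meets no point of $P'$ except possibly $p_i'$ itself, so it can be shifted one step east — passing through $p_i$ — and rejoined to the original path where it exits that column, losing nothing. (One small wording slip: from $q_{t-1}$ you cannot step directly to $p_i=q_t+e_1$; the rerouted path should still visit $q_t=p_i'$ and then step east, which costs nothing since $p_i'\notin P$.)

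The genuine problem is your strictness argument. The max score at $q=p_i'$ does \emph{not} drop in general, because after the move the weight $h_i$ sits at $p_i'$ itself, which lies on every North-East path originating at $p_i'$. Concretely, take $P=(p_1)$ with $p_1=(1,0)$: then $\max\score_{(P,h)}\bigl((0,0)\bigr)=h_1$ (step east to $(1,0)$) and $\max\score_{(P',h)}\bigl((0,0)\bigr)=h_1$ (the singleton path already collects $h_1$), so there is no drop at $p_1'=(0,0)$. Your sentence that ``there is nothing in column $x(p_i)-1$ at height $\geq y(p_i)$ to replace it'' overlooks that the moved point $p_i'$ is now exactly such a point: hypothesis (2) constrains only the $p_j$ in that column, not the relocated $p_i'$. (Your fallback case analysis also contains an impossibility: a path originating at $p_i'$ never visits a point with $y$-coordinate below $y(p_i)$, so it cannot ``enter below $p_i$''.) The correct place to detect the strict drop — and where the paper looks — is $p_i$ itself: every point of a North-East path originating at $p_i$ has $x$-coordinate $\geq x(p_i)$, so such a path can never reach $p_i'=p_i-e_1$; hence every path $\gamma$ from $p_i$ satisfies $\score_{(P',h)}(\gamma)=\score_{(P,h)}(\gamma)-h_i$, giving $\max\score_{(P',h)}(p_i)<\max\score_{(P,h)}(p_i)$ and therefore $\lambda'\subsetneq\lambda$.
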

\begin{proof}
Note that $\max\score_{(P',h)}(p_i)<\max\score_{(P,h)}(p_i)$ and that $\max\score_{(P',h)}(p)\leq\max\score_{(P,h)}(p)$ if $p<p_i$ with $x(p)=x(p_i)$. Further note that for all $p$ with $p\not\leq p_i$, we have $\max\score_{(P',h)}(p)=\max\score_{(P,h)}(p)$. So, to prove $(P',h)<(P,h)$, it suffices to show $\max\score_{(P',h)}(v)\leq\max\score_{(P,h)}(v)$ for all $v\leq p_i-e_1$.

For this, consider a North-East path $\gamma$ which contains $p'_i$. Let $\gamma=(q_1,\dots,q_s)$ with $q_\ell=p'_i$. Say $x(q_j)=x(p'_i)$ for $\ell\leq j\leq m$ and that $x(q_{m+1})>x(p'_i)$; this implies $q_{m+1}=q_m+e_1$. Let
\[
\gamma'=(q_1,\dots,q_\ell,q_\ell+e_1,\dots,q_m+e_1,q_{m+2},\dots,q_s).
\]
By hypothesis, $q_j\notin P$ for $\ell\leq j\leq m$, so
\[
\score_{(P',h)}(\gamma)\leq\score_{(P',h)}(\gamma')=\score_{(P,h)}(\gamma')
\]
which proves the result.
\end{proof}

\begin{proof}[{Proof of Proposition \ref{prop:complete-max-trails-exist}}]
Let $v\in B\setminus P$. Note that we cannot have $v+e_1\notin\supp(P)$ and $v+e_2\notin\supp(P)$ since this implies then $\max\score_{(P,h)}(v)=\score_{(P,h)}(\gamma)=0$, where $\gamma$ is the singleton path $(v)$. Thus, without loss of generality, $v+e_1\in\supp(P)$ and $v+e_2\notin\supp(P)$. As a result, $\max\score_{(P,h)}(v)$ is the sum of the $h_i$ for all $i$ with $y(p_i)=y(v)$. Since this quantity is non-zero, we may assume without loss of generality that $y(v)=y(p_1)$, $x(v)<x(p_1)$, and there are no $p\in P$ such that $y(v)=y(p)$ and $x(v)<x(p)<x(p_1)$. Let $P'=(p'_i)_i$ where $p'_1=p_1-e_1$ and $p'_i=p_i$ for $i\neq1$. Then by Lemma \ref{l:shrinking-path1}, $(P',h)<(P,h)$, showing that $(P,h)$ is not minimal.
\end{proof}

%

We end this section with some further useful properties of minimal compatible floor plans.

\begin{lemma}\label{l:no-overlap}
    If $(P,Q,h)$ is a minimal compatible floor plan, then $P\cap Q=\varnothing$.
\end{lemma}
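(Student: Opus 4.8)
The plan is to argue by contradiction: suppose $(P,Q,h)$ is a minimal compatible floor plan but there is a common position $p = p_i = q_j \in P \cap Q$. The idea is to remove this overlapping position from \emph{both} floor plans simultaneously and produce a strictly smaller compatible floor plan, contradicting minimality. Concretely, let $P' = (p_k \mid k \neq i)$ be obtained from $P$ by deleting the entry at the overlapping position, and similarly $Q'$ from $Q$ by deleting its entry at the same position; we keep the same height sequence $h$ appropriately reindexed (this requires a little care since $\nu$ must stay in bijection across the two towers, so really we should delete the $i$-th shape $\nu_i$ from the tuple $\nu$ entirely, which is legitimate since the shape appears at position $p$ in both). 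Then $\cT(P',Q',h')$ has $\nu' = (\nu_k \mid k \neq i)$, and $|\nu'| = |\nu| - h_i < |\nu|$.

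The key computation is to control how $\lambda$ and $\mu$ shrink, and in particular to verify condition (\ref{def:partial-order::counterex}) of Definition \ref{def:Partial-Order-on-Examples}, namely $|\lambda'\cap\mu'| - |\nu'| \leq |\lambda\cap\mu| - |\nu|$. Writing $(\lambda,\nu,b) = \cT(P,h)$ and $(\lambda',\nu',b') = \cT(P',h')$, and similarly for $\mu,\mu'$, I would first establish that $\lambda' \subseteq \lambda$ and $\mu' \subseteq \mu$ (removing a scoring position can only decrease every max score, hence every height function value), so that $\lambda' \cap \mu' \subseteq \lambda \cap \mu$. Then it suffices to show that the number of lattice points lost from $\lambda \cap \mu$ is at least $h_i$: that is, $|(\lambda \cap \mu) \setminus (\lambda' \cap \mu')| \geq h_i$. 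For this, consider the point $p \in \N^2$ and the fiber $\pi^{-1}(p)$. By property \ref{cor:combinatorial-mats-Young-diagrams::propertyideal} applied to both towers, the shape $\nu_i$ sits at the very top of $\lambda$ over $p$ and at the very top of $\mu$ over $p$; in the minimal realization, $b_i$ and $c_i$ are chosen so that $\nu_i + b_i$ occupies heights $H_\lambda(p) - h_i$ through $H_\lambda(p) - 1$ over $p$, and likewise over $\mu$. The heights $H_{\lambda'}(p)$ and $H_{\mu'}(p)$ drop by at least $h_i$ each (a winning path for $p$ in $(P,h)$ passes through $p$, contributing $h_i$; after deletion this contribution is gone and no longer-scoring path exists — this needs the minimality-flavored argument that $p$ was genuinely "used" at the top, which follows because in $\cT(P,h)$ the height over $p$ is exactly $\max\score_{(P,h)}(p)$ and any winning path at $p$ starts by collecting $h_i$). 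Hence over the single column $\pi^{-1}(p)$ alone, $\lambda \cap \mu$ loses at least $h_i$ points, giving the desired inequality. Conditions (1) and (2) of Definition \ref{def:Partial-Order-on-Examples} are immediate: $\lambda' \subseteq \lambda$, $\mu' \subseteq \mu$, and the deletion gives the required injection on index sets with $\nu_k \subseteq \nu_k$.

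The main obstacle I anticipate is making rigorous the claim that deleting $p$ from $P$ decreases $H_\lambda(p)$ by \emph{exactly} (or at least) $h_i$, rather than by less — i.e., that there is no alternative North-East path from $p$ in $(P',h')$ recovering most of the lost score. A priori, after deleting the entry at $p$, some other winning path from $p$ that avoids $p$ could have nearly the same score. The resolution is that \emph{any} North-East path $\gamma$ from $p$ in $(P',h')$ is also a path in $(P,h)$ with the same score (since we only removed a position, and $\gamma$ starts at $p$ which is no longer scoring), so $H_{\lambda'}(p) = \max\score_{(P',h')}(p) \leq \max\score_{(P,h)}(p) - h_i = H_\lambda(p) - h_i$, because every path from $p$ in $(P,h)$ scores $p$ and thus picks up the $h_i$ that a $(P',h')$-path cannot. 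Wait — one must be careful that the height sequence reindexing doesn't let a \emph{different} shape also sit at position $p$; but if two shapes $\nu_i,\nu_{i'}$ were both at position $p=p_i=p_{i'}$ in $P$, the winning-path score at $p$ would count both, and deleting only $\nu_i$ still leaves $\nu_{i'}$ at $p$, so the drop would be only $h_i$, which is still what we need; and on the $\mu$ side $\nu_i$ also sits at $p = q_j$, so the symmetric bound holds there too. Assembling: $H_{\lambda'}(p) \leq H_\lambda(p) - h_i$ and $H_{\mu'}(p) \leq H_\mu(p) - h_i$, so over the column $\pi^{-1}(p)$, $|\lambda' \cap \mu' \cap \pi^{-1}(p)| = \min(H_{\lambda'}(p), H_{\mu'}(p)) \leq \min(H_\lambda(p), H_\mu(p)) - h_i = |\lambda \cap \mu \cap \pi^{-1}(p)| - h_i$, while over every other column the intersection can only shrink. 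Summing gives $|\lambda' \cap \mu'| \leq |\lambda \cap \mu| - h_i$, hence $|\lambda' \cap \mu'| - |\nu'| \leq |\lambda \cap \mu| - h_i - (|\nu| - h_i) = |\lambda \cap \mu| - |\nu|$, so $(P',Q',h') \leq (P,Q,h)$ and it is strictly smaller since $\lambda' \subsetneq \lambda$ (the point $p + (0,0, H_\lambda(p)-1)$ is lost). This contradicts minimality, completing the proof.
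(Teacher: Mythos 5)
Your strategy of deleting the overlapping shape relies on an assumption you make implicitly but which fails in general: that the shape occupying the shared position carries the same index in $P$ and in $Q$. In a compatible floor plan $(P,Q,h)$ the single tuple $h$ pairs the $m$-th entry of $P$ with the $m$-th entry of $Q$, so the shape $\nu_i$ sits at $p_i$ in the first tower but at $q_i$ (not $q_j$) in the second. If $p_i=q_j$ with $i\neq j$ --- which nothing in minimality rules out --- then your assertion that ``on the $\mu$ side $\nu_i$ also sits at $p=q_j$'' is false: deleting $\nu_i$ removes $q_i$ from $Q$, a position that may be far from $p$, so there is no reason to have $H_{\mu'}(p)\leq H_\mu(p)-h_i$. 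Without that bound your column-$p$ computation only gives $\height_{\lambda'\cap\mu'}(p)\leq H_\lambda(p)-h_i$, which need not be $\leq\min(H_\lambda(p),H_\mu(p))-h_i$ when $H_\mu(p)<H_\lambda(p)$, so condition (\ref{def:partial-order::counterex}) of Definition \ref{def:Partial-Order-on-Examples} is not established. Deleting both $\nu_i$ and $\nu_j$ does not repair this: then $|\nu|$ drops by $h_i+h_j$, but the guaranteed drop of the intersection at column $p$ is only $\min(h_i,h_j)$, with no control at the columns $p_j$ and $q_i$.

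The paper's proof sidesteps the issue with a smaller, asymmetric move: assuming without loss of generality that $\max\score_{(P,h)}(p_i)\geq\max\score_{(Q,h)}(q_j)$, it decrements $h_j$ by $1$ --- i.e., it shortens by one box the shape sitting at the shared position on the side whose column there is the \emph{lower} of the two. This forces $\height_{\lambda'\cap\mu'}(p)\leq H_{\mu'}(p)= H_\mu(p)-1=\height_{\lambda\cap\mu}(p)-1$ while $|\nu|$ drops by exactly $1$, and all other columns weakly decrease, contradicting minimality. Your argument is correct (and gives a stronger decrement) in the special case $i=j$, and your key mechanism --- that every path originating at $p$ necessarily scores the shapes placed at $p$, so removing height there strictly lowers the max score --- is exactly right; but as written the proof has a genuine gap in the case $i\neq j$.
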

\begin{proof}
    Let $P=(p_i)_i$ and $Q=(q_i)_i$. 
    Let $(\lambda,\mu,\nu,b,c)=\cT(P,Q,h)$. We show that if $P\cap Q$ is non-empty, then $(\lambda,\mu,\nu,b,c)$ is not minimal. Without loss of generality, $p_i=q_j$ and $\max\score_{(P,h)}(p_i)\geq\max\score_{(Q,h)}(q_j)$. Let $h'_j=h_j-1$ and $h'_m=h_m$ for all $m\neq j$. Letting $(\lambda,\mu',\nu',b',c')=\cT(P,Q,h')$, we see
    \[
|\lambda\cap\mu'|\leq|\lambda\cap\mu|-1
    \]
    and so $(\lambda,\mu',\nu',b',c')<(\lambda,\mu,\nu,b,c)$.
\end{proof}

It follows immediately from Proposition \ref{prop:complete-max-trails-exist} and Lemma \ref{l:no-overlap} that we have:

\begin{corollary}
\label{cor:inclusion of ribbons}
    If $(P,Q,h)$ is a minimal compatible floor plan, then 
    \[
    \supp(P)\subset\supp(Q)\quad\textrm{or}\quad \supp(Q)\subset\supp(P).
    \]
\end{corollary}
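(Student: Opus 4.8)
The statement to prove is Corollary \ref{cor:inclusion of ribbons}: if $(P,Q,h)$ is a minimal compatible floor plan, then $\supp(P)\subset\supp(Q)$ or $\supp(Q)\subset\supp(P)$.

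The plan is to combine the two preceding results, Proposition \ref{prop:complete-max-trails-exist} and Lemma \ref{l:no-overlap}, with the combinatorial structure of supports. The key observation is that $\supp(P,h)$ is an order ideal in $\N^2$ with respect to the reverse partial order — more precisely, a ``staircase region'': if $q\in\supp(P,h)$ and $q'\leq q$ coordinatewise, then $q'\in\supp(P,h)$, since any North-East path from $q$ can be prepended with a path from $q'$ to $q$, only increasing the score. So $\supp(P,h)$ is a Young-diagram-shaped region (finite, downward closed), and its border $B(P,h)$ is precisely its ``outer staircase.'' Two such staircase regions are nested (one contained in the other) if and only if their borders don't ``cross,'' i.e., if and only if each border is entirely contained in the closed region determined by the other.

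Here is how I would carry it out. First, establish the downward-closedness of supports noted above; this is a one-line argument from the definition of $\max\score$. Second, suppose for contradiction that neither $\supp(P)\subseteq\supp(Q)$ nor $\supp(Q)\subseteq\supp(P)$. Then there is a point $a\in\supp(P)\setminus\supp(Q)$ and a point $b\in\supp(Q)\setminus\supp(P)$. Because both supports are finite downward-closed staircase regions that are not nested, one can locate a point $v$ on the border of one of them, say $v\in B(P)$, which also lies in $\supp(Q)$ but with $v$ interior enough to $\supp(Q)$ — concretely, a point where the two staircase boundaries cross, so that $v\in B(P)\cap\supp(Q)$ and also a corresponding point in $B(Q)\cap\supp(P)$. (This is the standard fact that two non-nested staircases must have boundaries that interleave.) Third, apply Proposition \ref{prop:complete-max-trails-exist}: since $(P,Q,h)$ is minimal, both $(P,h)$ and $(Q,h)$ are minimal floor plans, so $B(P)\subseteq P$ and $B(Q)\subseteq Q$. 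Hence $v\in P$, and the crossing point on the other side lies in $Q$. Finally, I would argue that the crossing configuration forces a common point of $P$ and $Q$, contradicting Lemma \ref{l:no-overlap}: if the borders cross, then near the crossing there is a lattice point lying in $B(P)\subseteq P$ and simultaneously in $B(Q)\subseteq Q$ (or at least in $P\cap Q$), which is impossible.

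The main obstacle I anticipate is making the ``crossing of staircases forces a common lattice point'' step fully rigorous, since it is geometrically obvious but slightly fiddly combinatorially: I would formalize it by ordering the columns $x=0,1,2,\dots$ and letting $f(x)$ (resp.\ $g(x)$) be the height of $\supp(P)$ (resp.\ $\supp(Q)$) over column $x$ — i.e., the largest $y$ with $(x,y)\in\supp$, or $-\infty$ if the column is empty — so that $f,g$ are non-increasing step functions. Nestedness of the regions is exactly the statement $f\leq g$ everywhere or $g\leq f$ everywhere. If neither holds, pick columns $x_0$ with $f(x_0)>g(x_0)$ and $x_1$ with $f(x_1)<g(x_1)$; by monotonicity and an intermediate-value-type argument on the integer line, there is a column $x^*$ where the ordering flips, and at that column one checks that the border point of $P$ in column $x^*$ (or the adjacent column) lies in $\supp(Q)$ and on $B(Q)$ as well — or more simply, that the top box of the shorter region's column sits on the border of both, hence in $P\cap Q$. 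Pinning down exactly which lattice point to name, and verifying it lies in both borders rather than just in one border and the other's support, is the only place requiring care; everything else is a direct invocation of the two cited results.
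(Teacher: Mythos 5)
Your proposal is correct and is exactly the argument the paper has in mind (the paper states the corollary as an immediate consequence of Proposition \ref{prop:complete-max-trails-exist} and Lemma \ref{l:no-overlap}, leaving the staircase-crossing step implicit): supports are finite downward-closed subsets of $\N^2$, non-nested such regions must have intersecting borders, and $B(P)\subseteq P$, $B(Q)\subseteq Q$ then contradicts $P\cap Q=\varnothing$. The step you flag as fiddly does go through as you sketch it: at a column $x^*$ with $f(x^*)>g(x^*)$ and $f(x^*+1)\leq g(x^*+1)$, the top box $(x^*,g(x^*)-1)$ of the shorter region lies in both borders unless $f(x^*+1)=g(x^*)$, in which case the two regions share the top box of column $x^*+1$, which lies in both borders.
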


%

\section{Proof of Theorem \ref{thm:main-intro}}

We turn now to Theorem \ref{thm:main-intro}. As shown in \S\ref{sec:review-combinatorial-approach}, this is equivalent to proving Theorem \ref{thm:main}.

\begin{proof}[{Proof of Theorem \ref{thm:main}}]
Assume there exists a counter-example to the theorem. By Remark \ref{rmk:existence of minimality}, Lemma \ref{lemma:scaffolded towers}, and Proposition \ref{prop:minimal-tower-section}, we may assume $(\lambda,\mu,\nu,b,c)=\cT(P,Q,h)$ with $(P,Q,h)$ a minimal compatible floor plan. By Corollary \ref{cor:inclusion of ribbons}, we may assume without loss of generality that $\supp(P)\subset\supp(Q)$.

Let $P=(p_1,\dots,p_r)$ and $Q=(q_1,\dots,q_r)$. Reindexing, we may assume there exists $N_1$ such that $h_i=1$ and $q_i\in\supp(Q)$ is maximal (in the partial ordering on $\N^2$) if and only if $i>N_1$. We may further assume there exists $N_0\leq N_1$ such that $q_i\in\supp(Q)$ is maximal if and only if $i>N_0$. Let
\[
P'=(p_i\mid i\leq N_1),\quad\textrm{and}\quad Q'=(q_i\mid i\leq N_1), \quad\textrm{and}\quad h'=(h_1,\dots,h_{N_0},h_{N_0+1}-1,\dots,h_{N_1}-1);
\]
in other words, $h'$ decreases the value of $h$ at all maximal elements of $\supp(Q)$ and deletes any indices which now have value $0$. Let $(\lambda',\mu',\nu',b',c')=\cT(P',Q',h')$. We claim $(\lambda',\mu',\nu',b',c')<(\lambda,\mu,\nu,b,c)$, contradicting minimality of $(\lambda,\mu,\nu,b,c)$.

To see this, first note that 
\[
\max\score_{(P',h')}(p)\leq\max\score_{(P,h)}(p)
\]
for all $p\in\supp(P)$ and that
\[
\max\score_{(P',h')}(p_i)\leq\max\score_{(P,h)}(p_i)-1
\]
for all $N_0<i\leq N_1$. Thus, $\lambda'\subsetneq\lambda$. Furthermore, $B(Q,h)\subset Q$ by Proposition \ref{prop:complete-max-trails-exist}; it follows that for all $q\in\supp(Q)$, any winning path $\gamma$ for $(Q,h)$ originating at $q$ must contain a (necessarily unique) maximal element of $\supp(Q)$. Since the value of $h'$ is one less than the value of $h$ at all maximal elements of $\supp(Q)$, we see
\[
\max\score_{(Q',h')}(q)\leq\max\score_{(Q,h)}(q)-1
\]
for all $q\in\supp(Q)$. In particular, $\mu'\subsetneq\mu$.

To show $(\lambda',\mu',\nu',b',c')<(\lambda,\mu,\nu,b,c)$, it therefore remains to prove
\begin{equation}\label{eqn:thm-main}
|\lambda' \cap \mu'| - \sum_j |\nu_j'| \leq |\lambda \cap \mu| - \sum_i |\nu_i|.
\end{equation}
For a three-dimensional Young diagram $\epsilon$ and $v\in\N^2$, let $\height_\epsilon(v)$ denote the number of boxes of $\epsilon$ living over $v$. 
For $i>N_0$, we see
\begin{align*}
\height_{\lambda'\cap\mu'}(p_i) &=\min(\max\score_{(P',h')}(p_i),\max\score_{(Q',h')}(p_i))\\
&\leq \min(\max\score_{(P,h)}(p_i),\max\score_{(Q,h)}(p_i))-1=\height_{\lambda\cap\mu}(p_i)-1.
\end{align*}
Thus,
\[
|\lambda'\cap\mu'| \ =\ \sum_{v\in \N^2}\height_{\lambda'\cap\mu'}(v)
\ \leq\ \sum_{v\in \N^2}\height_{\lambda\cap\mu}(v) - (r-N_0)\ \leq\ |\lambda\cap\mu|-(r-N_0).
\]
Observing that $\sum_i|\nu_i|-\sum_j|\nu'_j|=r-N_0$, we see \eqref{eqn:thm-main} holds.
\end{proof}

\bibliography{Gerstenhaber}
\bibliographystyle{amsalpha}

\end{document}